\newtheorem{theorem}{Theorem}[section]
\newtheorem{property}[theorem]{Property}
\newtheorem{lemma}[theorem]{Lemma}
\newtheorem*{pro-a}{Proposition a}
\newtheorem*{pro-b}{Proposition b}
\newtheorem*{pro-c}{Proposition c}
\newtheorem*{pro-d}{Proposition d}
\newtheorem*{thm-a}{Theorem A}
\newtheorem*{thm-c}{Theorem C}
\newtheorem*{algo-b}{Algorithm B}
\newtheorem*{algo-d}{Algorithm D}
\newcommand{\T}{\mathbb{T}}
\newcommand{\K}{\text{\rm Ker}}
\newcommand{\w}{\omega}
\newcommand{\s}{\sigma}
\newcommand{\A}{\alpha}
\title[The numbers of powers in the Tribonacci sequence]
{The numbers of powers in the Tribonacci sequence}
\author{Yu-Ke HUANG}
\address{School of Mathematics and Systems Science, Beihang University (BUAA), Beijing, 100191, P. R. China}
\email{huangyuke@buaa.edu.cn(Corresponding author).}
\author{Zhi-Ying WEN}
\address{Department of Mathematical Sciences, Tsinghua University, Beijing, 100084, P. R. China.}
\email{wenzy@tsinghua.edu.cn.}
\thanks{The research is supported by the Grant NSFC No.11431007, No.11271223 and No.11371210.}
\keywords{the Tribonacci sequence, square factor, cube factor, $\A$-power, gap sequence}
\subjclass[2010]{68R15.}
\begin{document}

\begin{abstract}
The Tribonacci sequence $\T$ is the fixed point of the substitution $\s(a)=ab$, $\s(b)=ac$, $\s(c)=a$. The prefix of $\T$ of length $n$ is denoted by
$\T[1,n]$.
The main result is threefold, we give:
(1) explicit expressions of the numbers of distinct squares and cubes in $\T[1,n]$;
(2) algorithms for counting the numbers of repeated squares and cubes in $\T[1,n]$;
(3) a discussion about $\A$-powers in $\T[1,n]$ for $\A\geq2$ and $n\geq1$.
\end{abstract}

\maketitle

\section{Introduction}

A sequence is said to be \emph{recurrent} if every factor occurs infinitely often, such as Sturmian sequences and Arnoux-Rauzy sequences \cite{AS2003}.
We say a (finite or infinite) word $\tau$ contains an \emph{$\A$-power} (real $\A>1$) if $\tau$ has a factor of the form $\w^{\lfloor\A\rfloor}\w'$. Here $\lfloor\A\rfloor$ is the integer not more than $\A$, $\w$ is a factor of $\tau$ and $\w'$ is a prefix of $\w$.
The study of powers in recurrent sequences has a
long history, and among the many significant contributions, we mention \cite{DL2003,FS1999,FS2014,JP2002,MS2014,S2010,TW2007}.

Let $\mathcal{A}=\{a,b,c\}$ be a three-letter alphabet.
The Tribonacci substitution $\s$ is defined by $\s(a)=ab$, $\s(b)=ac$ and $\s(c)=a$, which is also called the Rauzy substitution.
The Tribonacci sequence $\T$ is an infinite sequence over
the alphabet $\mathcal{A}$, defined to be
the fixed point beginning with the letter $a$ of the Tribonacci substitution.
$\T$ is a natural generalization of the Fibonacci sequence, and also a classical example of Arnoux-Rauzy sequences, see \cite{AS2003,FBFMS2002}.
Thus $\T$ is recurrent, each factor $\w$ occurs infinitely many times.
Let $\w_p$ be the position of last letter (also called the ending position) of the $p$-th occurrence of $\w$ for $p\geq1$. For instance, $(ab)_2=6$.

In this paper we mainly consider integer powers occurring in $\T$.
Let $\w\prec\T$ be a factor of $\T$.
The word $\w\w$ (resp. $\w\w\w$)
is called \emph{square} (resp. \emph{cube}) if it is a factor of $\T$. In this case, $\A=2$ (resp. $\A=3$).
As we know, $\T$ contains no fourth powers, see Theorem 4 in \cite{MS2014} for instance.
The properties of squares and cubes are objects of a great interest in many aspects of mathematics and computer science.

Define $T_{-2}=\epsilon$ (empty word), $T_{-1}=c$
and $T_m=\s^m(a)$ for $m\geq0$, called the
$m$-th Tribonacci word.
We denote the length of $T_m$ (the number of letters it contains) by $|T_m|=t_m$ for $m\geq-2$, called the $m$-th Tribonacci number. We define $t_{-3}=0$.

The first few Tribonacci words and numbers are listed below:
\begin{center}
\begin{tabular}{|c|c|c|c|c|c|c|c|c|}
\hline
$m$&-3&-2&-1&0&1&2&3&4\\\hline
$T_{m}$&&$\epsilon$&$c$&$a$&$ab$&$abac$&$abacaba$&$abacabaabacab$\\
\hline
$t_{m}$&0&0&1&1&2&4&7&13\\
\hline
\end{tabular}
\end{center}

\smallskip

Let $\T[1,n]$ be the prefix of $\T$ of length $n$.
The notation $\nu\triangleright\w$ means that the word $\nu$ is a suffix of the word $\w$.
In this paper, we mainly consider the four functions below, for $n\geq0$:
\begin{equation*}
\begin{cases}
A(n)=&\#\{\w\mid\w\w\prec\T[1,n]\},\\
&\text{the number of distinct squares in }\T[1,n];\\
B(n)=&\#\{(\w,p)\mid \w\w\triangleright\T[1,\w_{p+1}],\ \w_{p+1}\leq n\},\\
&\text{the number of repeated squares in }\T[1,n];\\
C(n)=&\#\{\w \mid \w\w\w\prec\T[1,n]\},\\
&\text{the number of distinct cubes in }\T[1,n];\\
D(n)=&\#\{(\w,p)\mid \w\w\w\triangleright\T[1,\w_{p+2}],\ \w_{p+2}\leq n\},\\
&\text{the number of repeated cubes in }\T[1,n].
\end{cases}
\end{equation*}

The methods for counting the four functions have attracted many authors.
In 2006, Glen \cite{G2006} gave expressions of $A(t_m)$.
In 2014, Mousavi-Shallit \cite{MS2014} gave expressions of $B(t_m)$ and $D(t_m)$.
In this paper, we give:
(1) explicit expressions of $A(n)$ and $C(n)$;
(2) algorithms for counting $B(n)$ and $D(n)$ for all $n\geq1$.
Moreover, we give a discussion about $\A$-powers in $\T[1,n]$ for $\A\geq2$.

\medskip

This paper is organized as follows.
The main results are presented in Section 2. Section 3 is devoted to explain the main difficulties and how we overcome them. We count the number of squares and cubes in Sections 4 and 5, respectively.
In the two sections, we establish recursive structures called square trees and cube trees.
In Section 6, we discuss the enumeration problem for $\A$-powers where $\A\geq2$.

\section{Main Results}

In order to calculate $A(n)$, $B(n)$, $C(n)$ and $D(n)$, we only need to calculate the four difference functions below, for $n\geq1$:
\begin{equation*}
\begin{cases}
a(n)=A(n)-A(n-1),&b(n)=B(n)-B(n-1),\\
c(n)=C(n)-C(n-1),&d(n)=D(n)-D(n-1).
\end{cases}
\end{equation*}
Obviously, $A(0)=B(0)=C(0)=D(0)=0$. Indeed, this is done in Propositions a-d.
Let $[m..n]$ denote the set of integers $\{m,m+1,\ldots,n\}$ for $m\leq n$.
Note that $[m]=\{m\}$.

\begin{pro-a}[] For each positive integer $n$, the number $a(n)=1$ if
\begin{equation*}
\begin{split}
n&\in\bigcup_{m\geq 3}\big([2t_{m-1}..t_{m}+2t_{m-3}-1]
\cup[2t_{m}-t_{m-1}..\tfrac{3t_{m}+t_{m-2}-3}{2}]\big) \\
& =  [8]\cup [10]\cup[14,15,16]\cup[19,20]\cup \cdots
\end{split}
\end{equation*}
Otherwise, $a(n)=0$.
\end{pro-a}

For $U=[m..n]$ and $V=[n+1..h]$, we denote $U\cup V=[m..h]$ by $[U,V]$.
For integer $k$, denote $U\oplus k=[m+k..n+k]$.
Three infinite sequences of sets $\{U_m\}_{m\geq0}$,
$\{V_m\}_{m\geq0}$ and $\{W_m\}_{m\geq0}$ are defined as below.
\begin{equation*}
\begin{cases}
U_m\ \,=\ [1..t_{m-3}]\oplus\frac{t_{m+1}+t_{m-1}-3}{2},\\
V_m\ \ =\ [1..t_{m-2}]\oplus\frac{3t_{m}-t_{m-2}-3}{2},\\
W_m\ =\ [1..t_{m-1}]\oplus\frac{3t_{m}+t_{m-2}-3}{2}.
\end{cases}
\end{equation*}
Here $U_0$, $V_0$, $U_1$ are empty sets.
$W_0=[1]$, $V_1=[2]$, $W_1=[3]$,
$U_2=[4]$, $V_2=[5]$, $W_2=[6,7]$. Obviously the union of
$U_0$, $V_0$, $W_0$, $U_1$, $V_1$, $W_1$, $U_2$, $V_2$, $W_2$, $\ldots$ is $\mathbb{Z}^{+}$ (the positive integers).

\smallskip

We denote the vector $[b(m),b(m+1),\ldots,b(n)]$ by $b([m..n])$ for short.

For vectors $U=[x_1,x_2,\ldots,x_m]$ and $V=[y_1,y_2,\ldots,y_n]$, $[U,V]=[x_1,x_2,\ldots,x_m,y_1,y_2,\ldots,y_n]$. When $m=n$, we denote $U+V=[x_1+y_1,x_2+y_2,\ldots,x_m+y_m]$.

\begin{pro-b}[]
We have $b(n)=0$ for $n\leq7$, and for $m\geq3$
\begin{equation*}
\begin{cases}
b(U_{m+2})
=b\big([U_{m-1},V_{m-1},W_{m-1}]\big)
+[\underbrace{1,\ldots,1,}_{\frac{-t_{m+1}+5t_{m-1}+1}{2}}
\underbrace{0,\ldots,0}_{\frac{t_{m+1}-3t_{m-1}-1}{2}}];\\
b(V_{m+1})
=b\big([U_{m-1},V_{m-1},W_{m-1}]\big)
+[\underbrace{0,\ldots,0,}_{\frac{t_{m-1}-t_{m-3}+1}{2}}
\underbrace{1,\ldots,1}_{\frac{t_{m-1}+t_{m-3}-1}{2}}];\\
b(W_m)\kern 0.25cm
=b\big([U_{m-1},V_{m-1},W_{m-1}]\big)
+[\underbrace{0,\ldots,0,}_{\frac{t_{m-1}+t_{m-3}+1}{2}}
\underbrace{1,\ldots,1}_{\frac{t_{m-1}-t_{m-3}-1}{2}}].
\end{cases}
\end{equation*}
Here the number under `` $\stackrel{\underbrace{}}{}$" means the number of repeated elements.
\end{pro-b}

\noindent\emph{Example.}
$b(U_3)=b([8])=[1],\kern 0.75cm b(V_3)=b([9,10])=[0,1]$,

$b(W_3)=b([11..14])=[0,0,0,1]$,

$b(U_4)=b([15,16])=[1,1],\kern 1cm b(V_4)=b([17..20])=[0,0,1,1]$,

$b(W_4)=b([21..27])=[1,0,1,0,0,1,2]$.

\begin{pro-c}[] For each positive integer $n$, the number $c(n)=1$ if
\begin{equation*}
\begin{split}
n&\in\bigcup_{m\geq7}[t_{m-1}+2t_{m-4}..\tfrac{3t_{m-1}-t_{m-3}-3}{2}]\\
&=[58]\cup[107,108]\cup[197..200] \cup \cdots
\end{split}
\end{equation*}
Otherwise, $c(n)=0$.
\end{pro-c}

We define an infinite sequence of sets $\{\Theta_m\}_{m\geq0}$, where
$$\Theta_m=[U_{m},V_{m},W_{m}]=[1..t_{m}]\oplus\tfrac{t_{m+1}+t_{m-1}-3}{2}.$$
For instance, $\Theta_0=[1]$, $\Theta_1=[2,3]$, $\Theta_2=[4,5,6,7]$, $\Theta_{3}=[8..14]$.

\begin{pro-d}[] We have $d(n)=0$ for $n\leq51$, and for $m\geq 6$,
\begin{equation*}
\begin{split}
d(\Theta_{m})
=&~d\big([\Theta_{m-3},\Theta_{m-2},\Theta_{m-1}]\big)\\
&+[\underbrace{0,\ldots,0,}_{\tfrac{-t_{m-1}+5t_{m-3}+1}{2}}
\underbrace{1,\ldots,1,}_{\tfrac{t_{m-1}-3t_{m-3}-1}{2}}
\underbrace{0,\ldots,0}_{t_{m-1}+t_{m-2}}].
\end{split}
\end{equation*}
\end{pro-d}

\noindent\emph{Example.}
$d(\Theta_{6})=d([52..95])=[\underbrace{0,\ldots,0}_6,1,\underbrace{0,\ldots,0}_{37}]$;

$d(\Theta_{7})=d([96..176])
=[\underbrace{0,\ldots,0}_{11},1,1,\underbrace{0,\ldots,0}_{30},1,\underbrace{0,\ldots,0}_{37}]$.

\medskip

By $A(n)=\sum_{i=1}^n a(i)$ and $C(n)=\sum_{i=1}^n c(i)$,
Propositions a and~c imply
the explicit expressions of $A(n)$ and $C(n)$,
as stated in Theorems A and C.
For $m\geq3$, we define $\A_m=2t_{m-1}$, $\beta_m=t_{m}+2t_{m-3}-1$, $\gamma_m=2t_{m}-t_{m-1}$ and $\theta_m=\frac{3t_{m}+t_{m-2}-3}{2}$.

\begin{thm-a}[The numbers of distinct squares, $A(n)$]\

For $n\leq7$, $A(n)=0$. For $n\geq8$, let $m$ be the positive integer such that $\A_m\leq n<\A_{m+1}$, then $m\geq3$,
\begin{equation*}
A(n)=
\begin{cases}
n-\frac{1}{2}(t_{m}+t_{m-3}+m+3),&\A_m\leq n<\beta_{m};\\
\frac{1}{2}(t_{m}+3t_{m-3}-m-5),&\beta_{m}\leq n<\gamma_m;\\
n-\frac{1}{2}(t_{m-1}+3t_{m-2}+m+3),&\gamma_m\leq n<\theta_m;\\
\frac{1}{2}(2t_{m-1}+t_{m-2}+3t_{m-3}-m-6),&\text{otherwise}.
\end{cases}
\end{equation*}
\end{thm-a}

\noindent\emph{Remark.} For $m\geq3$, $\theta_{m-1}\leq t_m<\A_{m}$. Thus by Theorem A we have $A(t_m)=A(\theta_{m-1})=\frac{1}{2}(2t_{m-2}+t_{m-3}+3t_{m-4}-m-5)$.
This is different but equivalent to Theorem 6.30 in Glen \cite{G2006}.

\smallskip

The first values of $a(n)$ and $A(n)$ are listed in the following table:
\begin{center}
\begin{tabular}{|c|c|c|c|c|c|c|c|c|c|c|c|}
\hline
$n$&[1..7]&8&9&10&[11..13]&14&15&16&17,18&19&20\\\hline
$a(n)$&0&1&0&1 &0    &1 &1 &1 &0    &1 &1\\\hline
$A(n)$&0&1&1&2 &2    &3 &4 &5 &5    &6 &7\\\hline
\end{tabular}
\end{center}

\begin{thm-c}[The numbers of distinct cubes, $C(n)$]\

For $n\leq 57$, $C(n)=0$. For $n\geq 58$, let $m$
be the positive integer such that $t_{m-1}+2t_{m-4}\leq n<t_{m}+2t_{m-3}$, then $m\geq7$,
\begin{equation*}
C(n)=
\begin{cases}
n-\frac{1}{2}(3t_{m-2}+t_{m-3}+4t_{m-4}+m-6),&n\leq\frac{3t_{m-1}-t_{m-3}-3}{2};\\
\frac{1}{2}(t_{m-5}+t_{m-6}-m+3),&otherwise.
\end{cases}
\end{equation*}
\end{thm-c}

\noindent\emph{Remark.}
For $m\geq7$,
$t_m>\frac{3t_{m-1}-t_{m-3}-3}{2}$,
$C(t_m)=\frac{1}{2}(t_{m-5}+t_{m-6}-m+3)$.
When $m\leq6$, $C(t_m)=0$. This expression holds for $m\in[3..6]$ too.

\medskip

The first values of $c(n)$ and $C(n)$ are listed in the following table:
\begin{center}
\begin{tabular}{|c|c|c|c|c|c|c|c|c|c|c|}
\hline
$n$&[1..57]&58&[59..106]&107&108&[109..196]&197&198&199\\\hline
$c(n)$&0&1 &0     &1  &1  &0      &1  &1  &1\\\hline
$C(n)$&0&1 &1     &2  &3  &3      &4  &5  &6\\\hline
\end{tabular}
\end{center}

\medskip

The maximal element in $W_m$ (resp. $\Theta_m$) is $\tfrac{t_{m+2}+t_{m}-3}{2}$, which is larger than $2t_m$ for $m\geq4$.
Since $B(n)=\sum_{i=1}^n b(i)$ and $D(n)=\sum_{i=1}^n d(i)$, we can calculate $B(n)$ and $D(n)$ by Algorithms B. and D.

\begin{algo-b}[The numbers of repeated squares, $B(n)$]\

1. Find the positive integer $M$ such that $2t_{M-1}< n\leq2t_M$;

2. Let $\vec{b}=[\underbrace{0,\ldots,0}_7,1,0,1,0,0,0,1]$. (For instance, $\vec{b}[8]=1$.)

\quad\ For $n\geq15$, i.e., $M\geq4$, go to Step 3; otherwise go to Step 5.

3. For $m\geq4$, calculate $b(U_m)$, $b(V_m)$ and $b(W_m)$ by Proposition b;

\quad\ replace the vector $\vec{b}$ by $[\vec{b},b(U_m),b(V_m),b(W_m)]$; replace $m$ by $m+1$.

4. Repeat Step 3, until $m=M+1$.

5. $B(n)$ is equal to the sum of the first $n$ elements in vector $\vec{b}$.
\end{algo-b}

\noindent\emph{Example.} We can calculate $B(n)$ quickly by Algorithm B. For instance, we get both $B(t_{24})=15,\!795,\!657$ and $B(5,\!000,\!000)=32,\!561,\!325$ within 2 seconds by computer.
We can check the former by Theorem 21 in \cite{MS2014}. Notice that the notation ``$t_m$" in this paper is written by ``$T_{n+2}$" in \cite{MS2014}.

\begin{algo-d}[The numbers of repeated cubes, $D(n)$]\

1. Find the positive integer $M$ such that $2t_{M-1}< n\leq2t_M$;

2. Let $\vec{d}=[\underbrace{0,\ldots,0}_{57},1,
\underbrace{0,\ldots,0}_{48},1,1,
\underbrace{0,\ldots,0}_{30},1,\underbrace{0,\ldots,0}_{37}]$.

\quad\ For $n\geq177$, i.e., $M\geq8$, go to Step 3; otherwise go to Step 5.

3. For $m\geq8$, calculate $d(\Theta_m)$ by Proposition d;

\quad\ replace the vector $\vec{d}$ by $[\vec{d},d(\Theta_m)]$; replace $m$ by $m+1$.

4. Repeat Step 3, until $m=M+1$.

5. $D(n)$ is equal to the sum of the first $n$ elements in vector $\vec{d}$.
\end{algo-d}

\noindent\emph{Example.} We get both $D(t_{24})=819,\!290$ and $D(5,\!000,\!000)=1,\!699,\!525$ within 2 seconds by computer.

\section{Main Difficulties and main tools}

Taking ``square" for example, the main difficulty in counting the numbers of squares in the Tribonacci sequence is twofold: D1 and D2.

\smallskip

\textbf{D1.} The positions of all squares are not easy to be determined.
As we know, the known results mainly consider the enumeration problem in the prefix of sequence of special length, such as $\T[1,t_m]=T_m=\s^m(a)$.
In our opinion, all these methods are hardly generalized to solve the enumeration problem in arbitrary prefix.

\smallskip

We overcome this difficulty by using the ``gap sequence" property of $\T$, which we introduced and studied in \cite{HW2015-2}.

Let $\tau=x_1x_2\cdots x_n$ be a finite word, $x_i\in\mathcal{A}$.
For any $1\leq i\leq j\leq n$,
we define $\tau[i]=x_i$, $\tau[i,j]=x_ix_{i+1}\cdots x_{j-1}x_j$.
Note that $\tau[i,i]=x_i$ and $\tau[i,i-1]=\epsilon$.
For $3\leq j+2\leq i\leq n$, $\tau[i,j]$ is a inverse word $(x_{j+1}x_{j+2}\cdots x_{i-1})^{-1}$.
Let $\w$ be a factor of $\T$.
The $p$-th \emph{gap} word of $\w$ is
$G_{p}(\omega)=\T[\w_p+1,\w_{p+1}-|\w|]$ for $p\geq1$.
When the $p$-th and $(p+1)$-th occurrences of $\w$ are adjacent (resp. overlapped), $G_p(\w)$ is empty word (resp. an inverse word). For instance, let $\w=abacaba$, then $G_1(\w)=\epsilon$ and $G_2(\w)=a^{-1}$.
The sequence $\{G_p(\omega)\}_{p\geq1}$ is called
the \emph{gap sequence} of factor $\w$.
By the gap sequence property of $\T$, the set $\{\w\mid \w_{p+i}=\w_{p}+|\w|,i\geq2,p\geq1\}$ is empty. Thus the word $\w\w$ is a square means at least one element in set $\{G_p(\w)\mid p\geq1\}=\{G_1(\w),G_2(\w),G_4(\w)\}$ is empty word $\epsilon$.

\begin{property}[Theorem 3.3 in \cite{HW2015-2}]\label{Gt}
For each factor $\w\prec\T$, the gap sequence $\{G_p(\w)\}_{p\geq1}$ is itself still a Tribonacci sequence over the alphabet $\{G_1(\w),G_2(\w),G_4(\w)\}$.
\end{property}

\noindent\emph{Example.} Consider $ab\prec\T$, $G_1(ab)$ (resp. $G_2(ab)$, $G_4(ab)$) is factor $ac$ (resp. $a$, $\epsilon$), which is denoted by $A$  (resp. $B$, $C$) below.
\begin{equation*}
\begin{split}
\T=~&ab\underbrace{ac}_Aab\underbrace{a}_B ab\underbrace{ac}_Aab\underbrace{\epsilon}_C
ab\underbrace{ac}_Aab\underbrace{a}_Bab\underbrace{ac}_A\\
&ab\underbrace{ac}_Aab\underbrace{a}_B
ab\underbrace{ac}_Aab\underbrace{\epsilon}_Cab\underbrace{ac}_Aab\underbrace{a}_B \quad\cdots
\end{split}
\end{equation*}

The other important tool is ``kernel word".
We define the kernel numbers that $k_{0}=0$, $k_{1}=k_{2}=1$, $k_m=k_{m-1}+k_{m-2}+k_{m-3}-1$ for $m\geq3$.
Then $k_m=\frac{t_{m-3}+t_{m-5}+1}{2}$ for $m\geq3$.
The \emph{kernel} word with order $m$ is defined as
$K_1=a$, $K_2=b$, $K_3=c$, $K_m=\delta_{m-1}T_{m-3}[1,k_m-1]$ for $m\geq4$, where $\delta_m$ is the last letter of $T_m$.

The first few kernel words and numbers are listed below:
\begin{center}
\begin{tabular}{|c|c|c|c|c|c|c|c|}
\hline
$m$&1&2&3&4&5&6&7\\\hline
$K_{m}$&$a$&$b$&$c$&$aa$&$bab$&$cabac$&$aabacabaa$\\
\hline
$k_{m}$&1&1&1&2&3&5&9\\
\hline
\end{tabular}
\end{center}

\smallskip

Let $\K(\w)$ be the maximal kernel word occurring in factor $\omega$. For instance $\K(abacab)=c$.
By Theorem 4.3 in \cite{HW2015-2}, $\K(\w)$ occurs in $\w$ only once.
Moreover,

\begin{property}[Theorem 4.11 in \cite{HW2015-2}]\label{wpt}
Let $\w\prec \T$ be a factor. For all $p\geq 1$, the difference
$\w_p-\K(\w)_p= \w_1-\K(\w)_1$ is independent of $p$.
\end{property}

\noindent\emph{Example.} $\K(aba)=b$, $(aba)_1=3$, $b_1=2$, $(aba)_6=20$ and $b_6=19$.

\medskip

\textbf{D2.} By the gap sequence property of $\mathbb{T}$, we can find out all distinct squares in $\mathbb{T}[1,n]$. An immediate idea is counting the number of occurrences of each square, and the summation of them are the numbers of repeated squares in $\mathbb{T}[1,n]$. But this method is complicated.

\smallskip

We overcome this difficulty by studying the relations among the positions $\omega_p$ for all $\omega\prec\T$ and $p\geq1$. Then we establish a recursive structure, called square trees, see Figure \ref{Fig:1}.
In Section 6, we generalize the square trees to $\A$-power trees.
We also show the evolution of $\A$-power trees according to the increase of parameter $\A$ from 2 to 3.

\section{The number of squares}

In this section, we count the number of distinct and repeated squares in the Tribonacci sequence. First we give some basic properties of squares.
By Lemma 4.7, Definition 4.12 and Corollary 4.13 in \cite{HW2015-2}, any factor $\omega$ with kernel $K_m$ can be expressed uniquely as
\begin{equation}\label{E1}
\omega=T_{m-1}[i,t_{m-1}-1]K_mT_{m}[k_m,k_m+j-1],
\end{equation}
where $1\leq i\leq t_{m-1}$, $0\leq j\leq t_{m-1}-1$.
Then $\w\w\prec\T$ is equivalent to
$$\w\w=T_{m-1}[i,t_{m-1}-1]K_m\ G(K_m)\ K_mT_{m}[k_m,k_m+j-1],$$
where $G(K_m)=T_{m}[k_m,k_m+j-1]T_{m-1}[i,t_{m-1}-1]$ is the gap between two consecutive $K_m$,
i.e., $G(K_m)=\T[(K_m)_p+1,(K_m)_{p+1}-k_m]$ for some $p$.
Moreover, $|G(K_m)|=t_{m-1}-i+j$ and $|\omega|=|G(K_m)|+k_m$.

By Proposition 3.2 in \cite{HW2015-2}, $G(K_m)$ has three cases
\begin{equation*}
\begin{cases}
G_1=T_{m-2}[k_{m},t_{m-2}]T_{m-3}T_{m-1}[1,t_{m-1}-1],\ |G_1|=t_{m}-k_{m};\\
G_2=T_{m-2}[k_{m},t_{m-2}]T_{m-1}[1,t_{m-1}-1],\ |G_2|=t_{m-2}+t_{m-1}-k_{m};\\
G_4=T_{m-1}[k_{m},t_{m-1}-1],\ |G_4|=t_{m-1}-k_{m}.
\end{cases}
\end{equation*}

\textbf{Case 1.} $G(K_m)=G_1$. Since $|G(K_m)|=|G_1|$, $j=t_{m-2}+t_{m-3}-k_{m}+i$. So $0\leq j\leq t_{m-1}-1$ gives a range of $i$. Comparing this range with $1\leq i\leq t_{m-1}$, we have $1\leq i\leq k_{m+1}-1$ and $m\geq3$.
Furthermore,
\begin{equation}\label{E2}
\w\w=T_{m}[i,t_{m}-1]\underbrace{\delta_{m}T_{m-1}[1,k_{m+1}-1]}_{\text{This is }K_{m+1}}T_{m+1}[k_{m+1},t_{m}+i-1].
\end{equation}
This means $K_{m+1}\prec\w\w$.
Moreover, $K_{m+2}$, $K_{m+3}$ and $K_{m+4}$ are not the factors of $\w\w$. Since $|\w\w|<k_{m+5}$, kernel word $K_{m+h}$ for $h\geq5$ are not the factors of $\w\w$.
Thus $K_{m+1}$ is the maximal kernel word in $\omega\omega$, i.e. $\K(\omega\omega)=K_{m+1}$ for $m\geq3$.
In this case, $|\omega|=|G(K_m)|+k_m=t_{m}$.

\smallskip

By analogous arguments, we have

\textbf{Case 2.} $G(K_m)=G_2$. In this case, $1\leq i\leq k_{m+2}-1$, $m\geq2$.
\begin{equation}\label{E3}
\w\w
=T_{m}[i,t_{m-1}+t_{m-2}-1]\underline{K_{m+2}}T_{m+1}[k_{m+2},t_{m-1}+t_{m-2}+i-1].
\end{equation}
So $\K(\omega\omega)=K_{m+2}$ and $|\omega|=t_{m-2}+t_{m-1}$.

\textbf{Case 3.} $G(K_m)=G_4$. In this case, $k_{m}\leq i\leq t_{m-1}$, $m\geq1$.
\begin{equation}\label{E4}
\w\w
=T_{m-1}[i,t_{m-1}-1]\underline{K_{m+3}}T_{m+1}[k_{m+3},t_{m-1}+i-1].
\end{equation}
So $\K(\omega\omega)=K_{m+3}$ and $|\omega|=t_{m-1}$.

\medskip

\noindent\emph{Remark.}
By the three cases of squares, we have: (1) all squares in $\mathbb{T}$ are of length $2t_m$ or $2t_m+2t_{m-1}$ for some $m\geq0$; (2) for all $m\geq0$, there exists a square of length $2t_m$ and $2t_m+2t_{m-1}$ in $\mathbb{T}$. They are known results of Mousavi-Shallit, see Theorem 5 in \cite{MS2014}.

\medskip

We denote by $|\w|_\gamma$ the number of letter $\gamma$ in $\w$ where $\gamma\in\mathcal{A}$.

Theorem 6.1 and Remark 6.2 in \cite{HW2015-2} gave the positions of all occurrences of $K_m$. An equivalent property is

\begin{property}[]\label{P} For $m,p\geq1$,
the ending position of the $p$-th occurrence of $K_m$, denoted by $(K_m)_p$, is equal to
$$pt_{m-1}+|\T[1,p-1]|_a(t_{m-2}+t_{m-3})+|\T[1,p-1]|_bt_{m-2}+k_m-1.$$
In particular, $(K_m)_1=k_{m+3}-1=\frac{t_{m}+t_{m-2}-1}{2}$ for $m\geq1$.
$a_p=p+|\mathbb{T}[1,p-1]|_a+|\mathbb{T}[1,p-1]|_b$,
$b_p=2p+2|\mathbb{T}[1,p-1]|_a+|\mathbb{T}[1,p-1]|_b$,
$c_p=4p+3|\mathbb{T}[1,p-1]|_a+2|\mathbb{T}[1,p-1]|_b$ for $p\geq1$.
\end{property}

We define three sets for $m\geq4$ and $p\geq1$,
\begin{equation}\label{N1}
\begin{cases}
K^1_{m,p}\!=\!\{(\w\w)_p\mid \K(\w\w)\!=\!K_m,|\w|\!=\!t_{m-1},\w\w\prec\T\};\\
K^2_{m,p}\!=\!\{(\w\w)_p\mid \K(\w\w)\!=\!K_m,|\w|\!=\!t_{m-3}+t_{m-4},\w\w\prec\T\};\\
K^3_{m,p}\!=\!\{(\w\w)_p\mid \K(\w\w)\!=\!K_m,|\w|\!=\!t_{m-4},\w\w\prec\T\}.
\end{cases}
\end{equation}
Obviously they correspond all ending positions of the three cases of squares, respectively.
The following property is a consequence of Expressions (\ref{E2})-(\ref{E4}) and Property \ref{P}.

\begin{property}\label{p4.3}
For $m\geq4$ and $p\geq1$,
\begin{equation}\label{E5}
\begin{cases}
K^1_{m,p}
=[\frac{t_{m-1}-t_{m-3}+1}{2}..t_{m-2}-1]\oplus\big((K_m)_p+t_{m-3}+t_{m-4}\big);\\
K^2_{m,p}
=[\frac{t_{m-1}-2t_{m-2}+t_{m-3}+1}{2}..t_{m-3}-1]\oplus\big((K_m)_p+t_{m-4}\big);\\
K^3_{m,p}
=[0..\frac{-t_{m-2}+5t_{m-4}-1}{2}]\oplus(K_m)_p.
\end{cases}
\end{equation}
Moreover $\#K^1_{m,p}=\#K^2_{m,p}=\frac{t_{m+1}-3t_{m-1}-1}{2}$, $\#K^3_{m,p}=\frac{-t_{m-2}+5t_{m-4}+1}{2}$.
\end{property}

\subsection{Proofs of Proposition a. and Theorem A}

Now we consider the number of distinct squares in $\T[1,n]$.
Take $p=1$ in Expression~(\ref{E5}). Since $(K_m)_1=\frac{t_{m}+t_{m-2}-1}{2}$, we have
\begin{equation*}
\begin{cases}
K^1_{m,1}=[2t_{m-1}..\frac{t_{m+1}+t_{m-1}-3}{2}];\\
K^2_{m,1}=[2t_{m-1}-t_{m-2}..\frac{3t_{m-1}+t_{m-3}-3}{2}];\\
K^3_{m,1}=[\frac{t_{m}+t_{m-2}-1}{2}..t_{m-1}+2t_{m-4}-1].
\end{cases}
\end{equation*}
It is easy to check that all these sets are pairwise disjoint.
Moreover $\max K^1_{m,1}+1=\min K^3_{m+1,1}$,
$[K^1_{m,1},K^3_{m+1,1}]=[2t_{m-1}..t_{m}+2t_{m-3}-1]$.
Therefore we get a chain in increasing order
$$K^3_{4,1},K^2_{4,1},[K^1_{4,1},K^3_{5,1}],\ldots,
K^2_{m,1},[K^1_{m,1},K^3_{m+1,1}],\ldots$$

By the definition of $A(n)$ (the number of distinct squares in $\T[1,n]$) and $a(n)=A(n)-A(n-1)$, we have
$$a(n)=\#\{\omega\mid \omega\omega\triangleright\T[1,n],\
\omega\omega\not\!\prec\T[1,n-1]\}.$$
By the definition of $K^j_{m,1}$ ($j=1,2,3$),  $a(n)=1$ if $n\in\cup_{m\geq4,j=1,2,3}K^j_{m,1}$, otherwise, $a(n)=0$. This achieves the proof of Proposition a.

Since $\A_m=\min K^1_{m,1}$, $\beta_m=\max K^3_{m+1,1}$, $\gamma_m=\min K^2_{m+1,1}$ and $\theta_m=\max K^2_{m+1,1}$,
by $A(n)=\sum_{i=1}^n a(i)$, Theorem A is an immediate consequence of Proposition a.

\subsection{Proofs of Proposition b. and Algorithm B}

Now we turn to give the number of repeated squares in $\T[1,n]$.

For $m\geq4$ and $p\geq1$, we consider the sets
\begin{equation}\label{E6}
\begin{cases}
\Gamma^1_{m,p}=[0..t_{m-2}-1]\oplus\big((K_m)_p+t_{m-3}+t_{m-4}\big);\\
\Gamma^2_{m,p}=[0..t_{m-3}-1]\oplus\big((K_m)_p+t_{m-4}\big);\\
\Gamma^3_{m,p}=[0..t_{m-4}-1]\oplus(K_m)_p.
\end{cases}
\end{equation}
Obviously, $\max\Gamma^2_{m,p}+1=\min\Gamma^1_{m,p}$,
$\max\Gamma^3_{m,p}+1=\min\Gamma^2_{m,p}$.
Moreover $\#\Gamma^1_{m,p}=t_{m-2}$, $\#\Gamma^2_{m,p}=t_{m-3}$ and $\#\Gamma^3_{m,p}=t_{m-4}$.

By Expression $(\ref{E5})$, $K^1_{m,p}$ (resp. $K^2_{m,p}$, $K^3_{m,p}$) contains several maximal (resp. maximal, minimal) elements in set $\Gamma^1_{m,p}$ (resp. $\Gamma^2_{m,p}$, $\Gamma^3_{m,p}$).
Thus we get a one-to-one correspondence between $\Gamma^i_{m,p}$ and $K^i_{m,p}$, $i=1,2,3$.

\begin{lemma}[]\label{Lt}
$1.\ |\T[1,a_p]|_a=|\T[1,b_p]|_b=|\T[1,c_p]|_c=p$;

$2.\ |\T[1,b_p]|_a=a_p$, $|\T[1,a_p|_b=|\T[1,p-1]|_a$;

$3.\ |\T[1,c_p]|_a=b_p$, $|\T[1,c_p]|_b=a_p$.
\end{lemma}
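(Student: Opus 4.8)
The plan is to read everything off the block structure of $\T=\s(\T)$ together with the abelianization of $\s$, instead of manipulating the closed forms in Property~\ref{P}. Writing $\T=\s(\T[1])\s(\T[2])\cdots$ as a concatenation of blocks, the crucial observations are: every block begins with $a$ and in fact contains exactly one $a$ (namely its first letter), since $\s(a)=ab$, $\s(b)=ac$ and $\s(c)=a$; the letter $b$ occurs only inside $\s(a)=ab$, as its last letter; and the letter $c$ occurs only inside $\s(b)=ac$, as its last letter. I shall also use the three identities $|\s(w)|_a=|w|$, $|\s(w)|_b=|w|_a$ and $|\s(w)|_c=|w|_b$, obtained by counting letters in the images of $a$, $b$ and $c$.

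Assertion (1) is immediate from the definitions: $\T[1,a_p]$ ends at the $p$-th occurrence of $a$, so it contains exactly $p$ letters $a$, and similarly for $b$ and $c$. The content lies in (2) and (3), and the key step is to prove the two prefix-image identities
\begin{equation*}
\T[1,b_p]=\s(\T[1,a_p]),\qquad \T[1,c_p]=\s(\T[1,b_p]).
\end{equation*}
For the first, note that $\T[1,a_p]$ contains exactly $p$ letters $a$ by (1); each of them produces one $b$, sitting at the end of its block $\s(a)=ab$, and no other letter produces a $b$, so $\s(\T[1,a_p])$ contains exactly $p$ letters $b$, the last of which occupies its final position. Hence the length of $\s(\T[1,a_p])$ equals $b_p$; and since $\s(\T[1,a_p])$ is a prefix of $\s(\T)=\T$, we obtain $\T[1,b_p]=\s(\T[1,a_p])$. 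The second identity is proved identically, with $(a,b)$ replaced by $(b,c)$, using that $c$ appears only as the last letter of $\s(b)$.

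Granting these identities, (2) and (3) follow by abelianization. From $\T[1,b_p]=\s(\T[1,a_p])$ we get $|\T[1,b_p]|_a=|\s(\T[1,a_p])|_a=|\T[1,a_p]|=a_p$, the first half of (2); the second half, $|\T[1,a_p]|_b=|\T[1,p-1]|_a$, comes from the companion identity $\T[1,a_p]=\s(\T[1,p-1])\,a$ (the $p$-th $a$ is the leading letter of the $p$-th block) combined with $|\s(w)|_b=|w|_a$. In the same way, $\T[1,c_p]=\s(\T[1,b_p])$ gives $|\T[1,c_p]|_a=|\T[1,b_p]|=b_p$ and $|\T[1,c_p]|_b=|\T[1,b_p]|_a=a_p$, the latter invoking the first half of (2); this is (3). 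The only delicate point, and the single genuine obstacle, will be the rigorous verification of the prefix-image identities: one must confirm that the block generated by the $p$-th $a$ (respectively $b$) really supplies the $p$-th $b$ (respectively $c$), and that this occurrence falls exactly at the end of $\s(\T[1,a_p])$ (respectively $\s(\T[1,b_p])$), so that no off-by-one error corrupts the endpoint. All remaining computations are direct applications of the abelianization.
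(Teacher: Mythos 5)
Your proof is correct, and it takes a genuinely different route from the paper's. The paper derives the lemma from its kernel-word machinery: it identifies $aca$, $aba$, $aa$, $bab$ with the words $aG_2(a)a$, $aG_1(a)a$, $K_4$, $K_5$, uses the gap-sequence property (Property \ref{Gt}) and the invariance of $\w_p-\K(\w)_p$ (Property \ref{wpt}) to equate two expressions for positions such as $(aca)_p=a_{b_p+1}=c_p+1$, and then extracts the letter counts by solving the resulting linear equations via the closed forms in Property \ref{P}. You instead read everything off the block decomposition $\T=\s(\T)$ and the abelianization of $\s$: the identities $\T[1,a_p]=\s(\T[1,p-1])a$, $\T[1,b_p]=\s(\T[1,a_p])$, $\T[1,c_p]=\s(\T[1,b_p])$ (each justified by the observation that $a$, $b$, $c$ occur exactly once per block $\s(a)$, $\s(a)$, $\s(b)$ respectively, at a known position within the block), combined with $|\s(w)|_a=|w|$, $|\s(w)|_b=|w|_a$, $|\s(w)|_c=|w|_b$, immediately yield all five equalities; I have checked the endpoint bookkeeping you flag as the delicate step and it is sound. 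What your approach buys is self-containedness and transparency: it needs none of Properties \ref{Gt}, \ref{wpt}, \ref{P} and generalizes readily to any substitution in which each letter of the alphabet occurs in a unique image block. What the paper's approach buys is uniformity with the rest of the argument: the same kernel-word formulas are reused throughout Sections 4 and 5, so deriving the lemma from Property \ref{P} keeps all position computations in a single framework.
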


\begin{proof} 1. It holds by the definitions of $|\T[1,p]|_\gamma$ and $\gamma_p$, $\gamma\in\{a,b,c\}$.

2. By Properties \ref{Gt} and \ref{wpt}, the $p$-th occurrence of $aca=aG_2(a)a$ correspond to the $b_p$-th occurrence of letter $a$.
Moreover, the $(b_p+1)$-th letter $a$ occurs at position $(aca)_p$.
So $(aca)_p=a_{b_p+1}$.

Since $\K(aca)=c$,
$$(aca)_p=c_p+1=4p+3|\T[1,p-1]|_a+2|\T[1,p-1]|_b+1.$$

On the other hand, $b_p=2p+2|\T[1,p-1]|_a+|\T[1,p-1]|_b$, so
\begin{equation*}
\begin{split}
&a_{b_p+1}=b_p+1+|\T[1,b_p]|_a+|\T[1,b_p]|_b\\
=&3p+2|\T[1,p-1]|_a+|\T[1,p-1]|_b+|\T[1,b_p]|_a+1.
\end{split}
\end{equation*}

By $(aca)_p=a_{b_p+1}$, $|\T[1,b_p]|_a=p+|\T[1,p-1]|_a+|\T[1,p-1]|_b=a_p$.

Similarly, since $aba=aG_1(a)a$, the $(a_p+1)$-th letter $a$ occurs at position $(aba)_p$. This means $a_{a_p+1}=(aba)_p$.
By Property \ref{P}, the second equation holds.

3. By analogous arguments, $(aa)_p=a_{c_p+1}$ and $(bab)_p=b_{c_p+1}$.
Since $aa=K_4$ and $bab=K_5$, by Property \ref{P}, we get two equations
\begin{equation*}
\begin{cases}
|\T[1,c_p]|_a+|\T[1,c_p]|_b
=3p+3|\T[1,p-1]|_a+2|\T[1,p-1]|_b;\\
2|\T[1,c_p]|_a+|\T[1,c_p]|_b=5p+5|\T[1,p-1]|_a+3|\T[1,p-1]|_b.
\end{cases}
\end{equation*}
Thus $|\T[1,c_p]|_a=b_p$ and $|\T[1,c_p]|_b=a_p$. The conclusions hold.
\end{proof}

Using Lemma \ref{Lt},
comparing the minimal and maximal elements in these sets below, we have
\begin{equation}\label{E7}
\begin{cases}
\Gamma^1_{m,p}=[\Gamma^3_{m-1,a_p+1},\Gamma^2_{m-1,a_p+1},\Gamma^1_{m-1,a_p+1}],
&m\geq5;\\
\Gamma^2_{m,p}=[\Gamma^3_{m-2,b_p+1},\Gamma^2_{m-2,b_p+1},\Gamma^1_{m-2,b_p+1}],
&m\geq6;\\
\Gamma^3_{m,p}=[\Gamma^3_{m-3,c_p+1},\Gamma^2_{m-3,c_p+1},\Gamma^1_{m-3,c_p+1}],
&m\geq7.
\end{cases}
\end{equation}
Thus we establish recursive relations for any $\Gamma^i_{m,p}$,
$i\in\{1,2,3\}$, $m\geq4$, $p\geq1$.
By the one-to-one correspondence between $\Gamma^i_{m,p}$ and $K^i_{m,p}$, we can define a recursive structure over $\{K^i_{m,p}\}$.

The recursive structure is a directed graph $\mathcal{G}=(V,E)$ where:
\begin{equation*}
\begin{split}
V&=\{nodes\}=\{K^i_{m,p}\mid i=1,2,3,\ m\geq4,\ p\geq1\};\\
E&=\{edges\}=
\begin{cases}
K^1_{m+1,p} \rightarrow K^i_{m,a_p+1};\\
K^2_{m+2,p} \rightarrow K^i_{m,b_p+1};\\
K^3_{m+3,p} \rightarrow K^i_{m,c_p+1}.
\end{cases}
(i=1,2,3)
\end{split}
\end{equation*}
Here the notation ``$x\rightarrow y$'' means
a directed edge from the node $x$ to~$y$.

\begin{property}[]\label{P6.5}\
The recursive structure $\mathcal{G}$ is a family of finite rooted trees with
nodes
$\{K^i_{m,p}\mid i=1,2,3,\ m\geq4,\ p\geq1\}$ satisfying the following conditions:

$1.$ The roots are $\{K^i_{m,1}\mid i=1,2,3,\ m\geq4\}$;

$2.$ The leaves are
$\{K^1_{4,p},K^2_{4,p},K^3_{4,p},K^2_{5,p},K^3_{5,p},K^3_{6,p}\mid p\geq 1\}$.
\end{property}

\begin{proof}
Since $\mathbb{Z}^{+}=\{1\}\cup\{a_p+1\}\cup\{b_p+1\}\cup\{c_p+1\}$, nodes $\{K^i_{m,p}\mid i=1,2,3,\ m\geq4,\ p\geq1\}$ can be divide into four types:
\begin{equation*}
\begin{cases}
K^i_{m,1}\text{ has no ``parent'', i.e., it is a root};\\
K^i_{m,a_p+1}\text{ has a unique ``parent'' }K^1_{m+1,p};\\
K^i_{m,b_p+1}\text{ has a unique ``parent'' }K^2_{m+2,p};\\
K^i_{m,c_p+1}\text{ has a unique ``parent'' }K^3_{m+3,p}.
\end{cases}
\end{equation*}
Thus each node $K^i_{m,p}$ ($p\geq2$) has a unique ``parent'', so the recursive structure $\mathcal{G}$ is a family of finite trees, and only $K^i_{m,1}$ can be the root of these trees.
Furthermore, only the notes in condition 2 have no ``son". Thus they are leaves.
\end{proof}

By Equation (\ref{E7}) and the one-to-one correspondence between $\Gamma^i_{m,p}$ and $K^i_{m,p}$,
all integer elements in the nodes of the tree with root $K^i_{m,1}$ are elements of set $\Gamma^i_{m,1}$.
Since $\Gamma^i_{m,1}$'s are pairwise disjoint, the trees are pairwise disjoint too.
Figure \ref{Fig:1} shows three finite trees in the recursive structure $\mathcal{G}$ with roots $K^i_{8,1}$, $i=1,2,3$, respectively.

Since all integer elements in the nodes of the trees are the ending positions of
squares in the Tribonacci sequence $\T$, we call the recursive structure $\mathcal{G}$ ``\emph{square trees}".
Now me illustrate the relation between ``square trees" and ``squares in $\T$" with the help of an example.

\smallskip

\noindent\emph{Example.}  There are four integer 132's in the tree with root $K^2_{8,1}$, see Figure \ref{Fig:1}. Since all square trees are pairwise disjoint, so the integer 132 only exist in the tree with root $K^2_{8,1}$. By the definition of $K^i_{m,p}$, there are four squares ending at position 132, $\#\{\w\w\mid \w\w\triangleright\T[1,132]\}=4$.

\smallskip

\begin{figure}[!ht]
\setlength{\unitlength}{0.67mm}
\newsavebox{\B}
\savebox{\B}(10.5,4){
\begin{picture}(11,4)
\put(-0.5,-1){\line(1,0){11}}
\put(-0.5,3){\line(1,0){11}}
\put(-0.5,-1){\line(0,1){4}}
\put(10.5,-1){\line(0,1){4}}
\end{picture}}
\newsavebox{\BB}
\savebox{\BB}(9,7){
\begin{picture}(9,7)
\put(-1,-1){\line(1,0){9}}
\put(-1,6){\line(1,0){9}}
\put(-1,-1){\line(0,1){7}}
\put(8,-1){\line(0,1){7}}
\end{picture}}
\newsavebox{\BBB}
\savebox{\BBB}(9,13){
\begin{picture}(9,13)
\put(-1,-1){\line(1,0){9}}
\put(-1,9){\line(1,0){9}}
\put(-1,-1){\line(0,1){10}}
\put(8,-1){\line(0,1){10}}
\put(3,3.5){$\cdot$}
\put(3,2.5){$\cdot$}
\end{picture}}
\newsavebox{\D}
\savebox{\D}(6,2){
\begin{picture}(6,2)
\put(2,-0.5){$\cdot$}
\put(2,0.5){$\cdot$}
\end{picture}}

\centering
\tiny
\begin{picture}(168,175)
\put(0,35){$\mathbf{K^1_{8,1}}$}
\put(1,30){162}
\put(1,27){163}
\put(4,18){$\vdots$}
\put(4,12){$\vdots$}
\put(1,3){175}
\put(1,0){176}
\put(0,-1){\line(1,0){9}}
\put(0,33){\line(1,0){9}}
\put(0,-1){\line(0,1){34}}
\put(9,-1){\line(0,1){34}}
\put(30,20){$\mathbf{K^1_{7,2}}$}
\put(31,15){169}
\put(31,12){170}
\put(31,8){\usebox{\D}}
\put(31,3){175}
\put(31,0){176}
\put(30,-1){\line(1,0){9}}
\put(30,18){\line(1,0){9}}
\put(30,-1){\line(0,1){19}}
\put(39,-1){\line(0,1){19}}
\put(40,68){$\mathbf{K^2_{7,2}}$}
\put(41,63){145}
\put(41,60){146}
\put(41,57){\usebox{\D}}
\put(41,54){151}
\put(41,51){152}
\put(40,50){\line(1,0){9}}
\put(40,66){\line(1,0){9}}
\put(40,50){\line(0,1){16}}
\put(49,50){\line(0,1){16}}
\put(10,125){$\mathbf{K^2_{8,1}}$}
\put(11,120){118}
\put(11,117){119}
\put(14,108){$\vdots$}
\put(14,102){$\vdots$}
\put(11,93){131}
\put(11,90){132}
\put(10,89){\line(1,0){9}}
\put(10,123){\line(1,0){9}}
\put(10,89){\line(0,1){34}}
\put(19,89){\line(0,1){34}}
\put(20,170){$\mathbf{K^3_{8,1}}$}
\put(22,165){96}
\put(22,162){97}
\put(24,154){$\vdots$}
\put(21,147){106}
\put(20,146){\line(1,0){9}}
\put(20,168){\line(1,0){9}}
\put(20,146){\line(0,1){22}}
\put(29,146){\line(0,1){22}}
\put(50,89){$\mathbf{K^3_{7,2}}$}
\put(51,84){133}
\put(51,78){138}
\put(50,78){\usebox{\BBB}}
\put(62,101){$\mathbf{K^1_{6,3}}$}
\put(63,96){129}
\put(63,90){132}
\put(62,90){\usebox{\BBB}}
\put(62,11){$\mathbf{K^1_{6,4}}$}
\put(63,6){173}
\put(63,0){176}
\put(62,0){\usebox{\BBB}}
\put(72,128){$\mathbf{K^2_{6,3}}$}
\put(73,123){116}
\put(73,117){119}
\put(72,117){\usebox{\BBB}}
\put(72,38){$\mathbf{K^2_{6,4}}$}
\put(73,33){160}
\put(73,27){163}
\put(72,27){\usebox{\BBB}}
\put(82,143){$\mathbf{K^3_{6,3}}$}
\put(83,138){109}
\put(83,132){112}
\put(82,132){\usebox{\BBB}}
\put(82,53){$\mathbf{K^3_{6,4}}$}
\put(83,48){153}
\put(83,42){156}
\put(82,42){\usebox{\BBB}}
\put(94,149){$\mathbf{K^1_{5,5}}$}
\put(95,144){107}
\put(95,141){108}
\put(94,141){\usebox{\BB}}
\put(94,98){$\mathbf{K^1_{5,6}}$}
\put(95,93){131}
\put(95,90){132}
\put(94,90){\usebox{\BB}}
\put(94,59){$\mathbf{K^1_{5,7}}$}
\put(95,54){151}
\put(95,51){152}
\put(94,51){\usebox{\BB}}
\put(94,8){$\mathbf{K^1_{5,8}}$}
\put(95,3){175}
\put(95,0){176}
\put(94,0){\usebox{\BB}}
\put(104,164){$\mathbf{K^2_{5,5}}$}
\put(105,159){100}
\put(105,156){101}
\put(104,156){\usebox{\BB}}
\put(104,113){$\mathbf{K^2_{5,6}}$}
\put(105,108){124}
\put(105,105){125}
\put(104,105){\usebox{\BB}}
\put(104,71){$\mathbf{K^2_{5,7}}$}
\put(105,66){144}
\put(105,63){145}
\put(104,63){\usebox{\BB}}
\put(104,23){$\mathbf{K^2_{5,8}}$}
\put(105,18){168}
\put(105,15){169}
\put(104,15){\usebox{\BB}}
\put(116,170){$\mathbf{K^3_{5,5}}$}
\put(118,165){96}
\put(118,162){97}
\put(116,162){\usebox{\BB}}
\put(116,119){$\mathbf{K^3_{5,6}}$}
\put(117,114){120}
\put(117,111){121}
\put(116,111){\usebox{\BB}}
\put(116,77){$\mathbf{K^3_{5,7}}$}
\put(117,72){140}
\put(117,69){141}
\put(116,69){\usebox{\BB}}
\put(116,29){$\mathbf{K^3_{5,8}}$}
\put(117,24){164}
\put(117,21){165}
\put(116,21){\usebox{\BB}}
\put(128,146){$\mathbf{K^1_{4,9}}$}
\put(130,141){108}
\put(128,141){\usebox{\B}}
\put(128,122){$\mathbf{K^1_{4,10}}$}
\put(130,117){119}
\put(128,117){\usebox{\B}}
\put(128,95){$\mathbf{K^1_{4,11}}$}
\put(130,90){132}
\put(128,90){\usebox{\B}}
\put(128,80){$\mathbf{K^1_{4,12}}$}
\put(130,75){139}
\put(128,75){\usebox{\B}}
\put(128,56){$\mathbf{K^1_{4,13}}$}
\put(130,51){152}
\put(128,51){\usebox{\B}}
\put(128,32){$\mathbf{K^1_{4,14}}$}
\put(130,27){163}
\put(128,27){\usebox{\B}}
\put(128,5){$\mathbf{K^1_{4,15}}$}
\put(130,0){176}
\put(128,0){\usebox{\B}}
\put(142,155){$\mathbf{K^2_{4,9}}$}
\put(144,150){104}
\put(142,150){\usebox{\B}}
\put(142,131){$\mathbf{K^2_{4,10}}$}
\put(144,126){115}
\put(142,126){\usebox{\B}}
\put(142,104){$\mathbf{K^2_{4,11}}$}
\put(144,99){128}
\put(142,99){\usebox{\B}}
\put(142,86){$\mathbf{K^2_{4,12}}$}
\put(144,81){135}
\put(142,81){\usebox{\B}}
\put(142,62){$\mathbf{K^2_{4,13}}$}
\put(144,57){148}
\put(142,57){\usebox{\B}}
\put(142,41){$\mathbf{K^2_{4,14}}$}
\put(144,36){159}
\put(142,36){\usebox{\B}}
\put(142,11){$\mathbf{K^2_{4,15}}$}
\put(144,6){172}
\put(142,6){\usebox{\B}}
\put(156,158){$\mathbf{K^3_{4,9}}$}
\put(158,153){102}
\put(156,153){\usebox{\B}}
\put(156,134){$\mathbf{K^3_{4,10}}$}
\put(158,129){113}
\put(156,129){\usebox{\B}}
\put(156,107){$\mathbf{K^3_{4,11}}$}
\put(158,102){126}
\put(156,102){\usebox{\B}}
\put(156,89){$\mathbf{K^3_{4,12}}$}
\put(158,84){133}
\put(156,84){\usebox{\B}}
\put(156,65){$\mathbf{K^3_{4,13}}$}
\put(158,60){146}
\put(156,60){\usebox{\B}}
\put(156,44){$\mathbf{K^3_{4,14}}$}
\put(158,39){157}
\put(156,39){\usebox{\B}}
\put(156,17){$\mathbf{K^3_{4,15}}$}
\put(158,12){170}
\put(156,12){\usebox{\B}}
\put(106,2){\line(1,0){20}}
\put(106,2){\line(1,1){14}}\put(120,16){\line(1,0){34}}
\put(106,2){\line(2,1){14}}\put(120,9){\line(1,0){20}}
\put(106,53){\line(1,0){20}}
\put(106,53){\line(1,1){13}}\put(119,66){\line(1,0){35}}
\put(106,53){\line(2,1){14}}\put(120,60){\line(1,0){20}}
\put(106,92){\line(1,0){20}}
\put(106,92){\line(1,1){16}}\put(122,108){\line(1,0){32}}
\put(106,92){\line(2,1){14}}\put(120,99){\line(1,0){20}}
\put(106,143){\line(1,0){20}}
\put(106,143){\line(1,1){16}}\put(122,159){\line(1,0){32}}
\put(106,143){\line(2,1){14}}\put(120,150){\line(1,0){20}}
\put(83,33){\line(1,0){43}}
\put(83,33){\line(2,1){26}}\put(109,46){\line(1,0){45}}
\put(83,33){\line(3,1){21}}\put(104,40){\line(1,0){36}}
\put(83,123){\line(1,0){43}}
\put(83,123){\line(2,1){26}}\put(109,136){\line(1,0){45}}
\put(83,123){\line(3,1){21}}\put(104,130){\line(1,0){36}}
\put(73,2){\line(1,0){19}}
\put(73,2){\line(1,1){26}}\put(99,28){\line(1,0){15}}
\put(73,2){\line(2,1){28}}
\put(73,92){\line(1,0){19}}
\put(73,92){\line(1,1){26}}\put(99,118){\line(1,0){15}}
\put(73,92){\line(2,1){28}}
\put(61,84){\line(1,0){79}}
\put(61,84){\line(2,1){6}}\put(67,87){\line(1,0){74}}\put(154,87){\line(1,0){1}}
\put(61,84){\line(2,-1){6}}\put(67,81){\line(1,0){59}}
\put(51,58){\line(1,0){41}}
\put(51,58){\line(2,1){20}}\put(71,68){\line(1,0){31}}
\put(51,58){\line(1,1){18}}\put(69,76){\line(1,0){45}}
\put(41,2){\line(1,0){19}}
\put(41,2){\line(1,1){29}}
\put(41,2){\line(1,2){22}}\put(63,46){\line(1,0){17}}
\put(11,2){\line(1,0){17}}
\put(11,2){\line(1,2){27}}
\put(11,2){\line(1,3){26}}\put(37,80){\line(1,0){10}}
\put(21,96){\line(1,0){39}}
\put(21,96){\line(2,1){49}}
\put(21,96){\line(1,1){38}}\put(59,134){\line(1,0){21}}
\put(31,160){\line(1,0){71}}
\put(31,160){\line(2,1){20}}\put(51,170){\line(1,0){63}}
\put(31,160){\line(1,-1){10}}\put(41,150){\line(1,0){51}}
\end{picture}
\normalsize
\caption{Square trees with roots $K^i_{8,1}$, $i=1,2,3$.
The directed edges are always from left to right.
The number of squares ending at position $n$ is equal to the number of integer $n$ occurs in the square trees. For instance, $b(132)=4$ and $b(174)=3$.
\label{Fig:1}}
\end{figure}
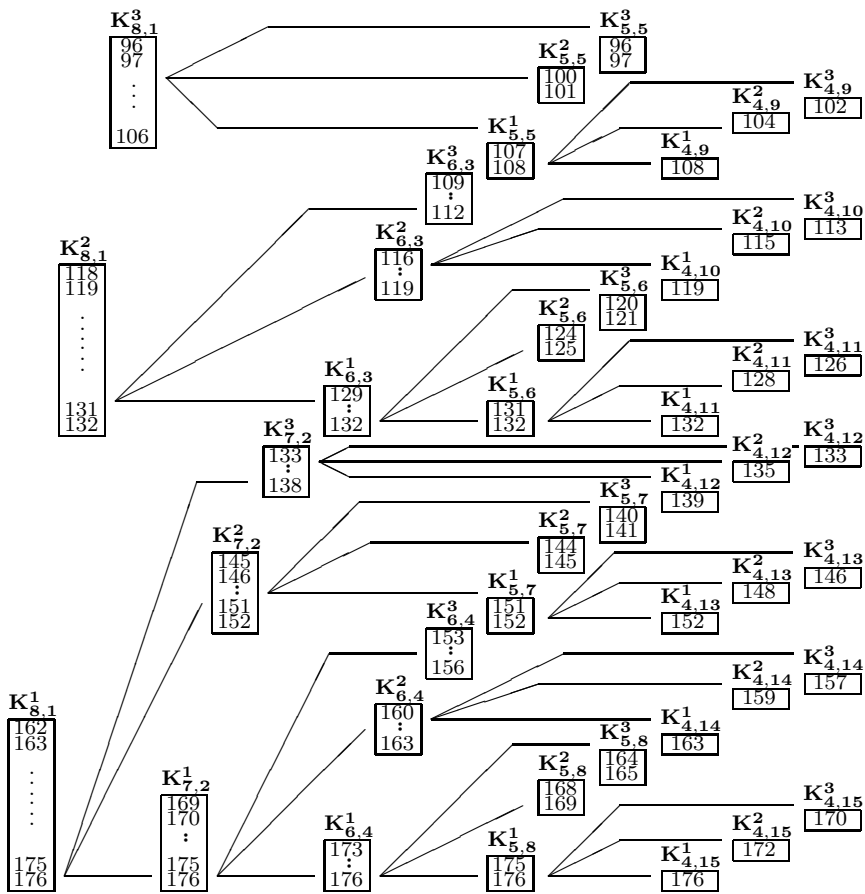

\begin{figure}[!ht]
\tiny
\setlength{\unitlength}{0.8mm}
\newsavebox{\C}
\savebox{\C}(10.5,4){
\begin{picture}(11,4)
\put(0.5,0){\line(1,0){8}}
\put(0.5,4){\line(1,0){8}}
\put(0.5,0){\line(0,1){4}}
\put(8.5,0){\line(0,1){4}}
\end{picture}}
\begin{center}
\begin{picture}(90,63)
\put(3,1){51}
\put(3,4){50}
\put(3,7){49}
\put(3,10){48}
\put(1,15){$\mathbf{K^1_{6,1}}$}
\put(1,0){\line(0,1){13}}
\put(1,0){\line(1,0){8}}
\put(9,0){\line(0,1){13}}
\put(1,13){\line(1,0){8}}
\put(10,10){\line(1,-1){7}}
\put(10,10){\line(1,2){9}}\put(19,28){\line(1,0){5}}
\put(25,27){the subtree with root $K^2_{5,2}$}
\put(10,10){\line(1,3){8}}\put(18,34){\line(1,0){6}}
\put(25,33){the subtree with root $K^3_{5,2}$}
\put(20,1){51}
\put(20,4){50}
\put(18,9){$\mathbf{K^1_{5,2}}$}
\put(18,0){\line(0,1){7}}
\put(18,0){\line(1,0){8}}
\put(26,0){\line(0,1){7}}
\put(18,7){\line(1,0){8}}
\put(20,41){27}
\put(20,44){26}
\put(18,49){$\mathbf{K^1_{5,1}}$}
\put(18,40){\line(0,1){7}}
\put(18,40){\line(1,0){8}}
\put(26,40){\line(0,1){7}}
\put(18,47){\line(1,0){8}}
\multiput(0,38)(3,0){32}{\line(1,0){2}}
\put(43,1){51}
\put(41,6){$\mathbf{K^1_{4,4}}$}
\put(40,0){{\usebox{\C}}}
\put(43,41){27}
\put(41,46){$\mathbf{K^1_{4,2}}$}
\put(40,40){{\usebox{\C}}}
\put(63,13){47}
\put(61,18){$\mathbf{K^2_{4,4}}$}
\put(60,12){{\usebox{\C}}}
\put(63,53){23}
\put(61,58){$\mathbf{K^2_{4,2}}$}
\put(60,52){{\usebox{\C}}}
\put(83,19){45}
\put(81,24){$\mathbf{K^3_{4,4}}$}
\put(80,18){{\usebox{\C}}}
\put(83,59){21}
\put(81,64){$\mathbf{K^3_{4,2}}$}
\put(80,58){{\usebox{\C}}}
\put(27,2){\line(1,0){13}}
\put(27,2){\line(1,1){12}}\put(39,14){\line(1,0){21}}
\put(27,2){\line(1,2){10}}\put(37,22){\line(1,0){43}}
\put(27,42){\line(1,0){13}}
\put(27,42){\line(1,1){12}}\put(39,54){\line(1,0){21}}
\put(27,42){\line(1,2){10}}\put(37,62){\line(1,0){43}}
\end{picture}
\end{center}
\normalsize
\caption{The graph embedding in square trees. For instance, the square tree with root $K^1_{5,1}$ and the subtree with root $K^1_{5,2}$ are isomorphic.}
\label{Fig:2}
\end{figure}
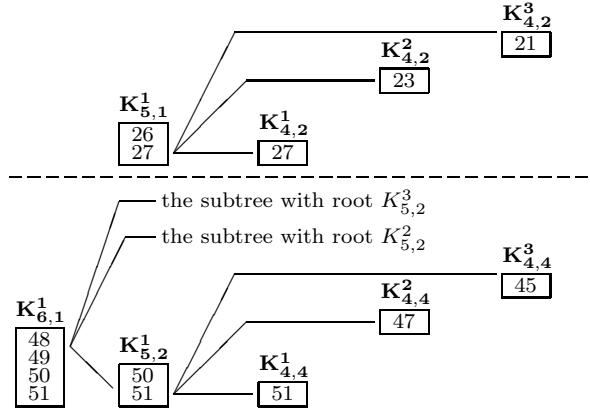

By the graph embedding in the recursive structure $\mathcal{G}$, see Figure~\ref{Fig:2}, we have Property \ref{P4.5}. This property helps us establish a recursive relation for counting the number of
repeated squares in $\T[1,n]$.

\begin{property}[]\label{P4.5}\ For $j=1,2,3$ and $1\leq i\leq t_{m-j-1}$
\begin{equation*}
\begin{split}
&\{\w\mid \w\w\triangleright\T[1,\Gamma^j_{m,1}[i]]\}\\
=&\{\w\mid \w\w\triangleright\T[1,\Gamma^j_{m,p}[i]],\K(\w)=K_h,1\leq h\leq m\}.
\end{split}
\end{equation*}
\end{property}

\noindent\emph{Example.}  Take $m=6$, $j=2$, $p=4$ and $i=7$ in Property \ref{P4.5}.
All squares ending at position $\Gamma^2_{6,1}[7]=38$ are $\{\nu\nu,\mu\mu\}$ where $\nu=\T[17,27]$ and $\mu=\T[25,31]$.
All squares ending at position
$\Gamma^2_{6,4}[7]=163$ are $\{\nu\nu,\mu\mu,\w\w\}$ where $\w=\T[2,82]$.
Since $\K(\nu\nu)=K_{6}$, $\K(\mu\mu)=K_4$, $\K(\w\w)=K_8$,
only $\{\nu\nu,\mu\mu\}$ are squares with kernel $K_{h}$, $1\leq h\leq 6$.

\begin{proof}[Proof of Proposition b.]
By the definitions of $\Gamma^i_{m,p}$ ($i=1,2,3$), $U_m$, $V_m$ and $W_m$,
using $(K_m)_1=\frac{t_{m}+t_{m-2}-1}{2}$, we have for $m\geq4$
\begin{equation*}
\begin{cases}
\Gamma^1_{m,1}=[1..t_{m-2}]\oplus\frac{3t_{m-1}+t_{m-3}-3}{2}=W_{m-1};\\
\Gamma^2_{m,1}=[1..t_{m-3}]\oplus\frac{3t_{m-1}-t_{m-3}-3}{2}=V_{m-1};\\
\Gamma^3_{m,1}=[1..t_{m-4}]\oplus\frac{t_{m}+t_{m-2}-3}{2}=U_{m-1}.
\end{cases}
\end{equation*}

By the definition of $B(n)$ and $b(n)=B(n)-B(n-1)$,
$b(n)$ means the number of squares ending at position $n$.
By the definition of $K^i_{m,p}$, the function $b(n)$ is equal to the number of integer $n$ occurs in the square trees.
By the graph embedding property in Property \ref{P4.5}, we can calculate $b(\Gamma^i_{m,1})$ ($i=1,2,3$) by recursive algorithm in Proposition~b.
\end{proof}

By $B(n)=\sum_{i=1}^n b(i)$, Algorithm B is an immediate consequence of Proposition b. So we prefer to omit the proof.

\section{The number of cubes}

By an analogous argument as Section 4, we count the number of distinct and repeated cubes in the Tribonacci sequence in this Section.
Let $\w$ be a factor with kernel $K_m$. By Proposition 6.7 in \cite{HW2015-2}, $\w\w\w\prec\T$ has only one case:
$G(K_m)=G_1$.
\begin{equation}\label{E8}
\begin{split}
\w\w\w
=&T_{m-1}[i,t_{m-1}]T_{m-2}T_{m-3}[1,t_{m-3}-1]\underline{K_{m+4}}\\
&T_{m+1}[k_{m+4},t_{m+1}]T_{m-3}T_{m-2}[1,i-1],
\end{split}
\end{equation}
where $1\leq i\leq \frac{t_{m-2}+t_{m-4}-1}{2}$, $m\geq3$, $\K(\w\w\w)=K_{m+4}$ and $|\w|=t_{m}$.

\medskip

\noindent\emph{Remark.}
By this case, we have that: all cubes in $\mathbb{T}$ are of length $3t_m$ for some $m\geq3$. Furthermore, for all $m\geq3$, there exists a cube of length $3t_m$ in $\mathbb{T}$. This is Theorem 7 in Mousavi-Shallit \cite{MS2014}.

\medskip

We consider the set for $m\geq7$ and $p\geq1$,
\begin{equation}\label{E9}
\begin{split}
K_{m,p}
=&\{(\w\w\w)_p\mid \K(\w\w\w)=K_m,|\w|=t_{m-4},\w\w\w\prec\mathbb{T}\}\\
=&[\tfrac{-t_{m-2}+5t_{m-4}+1}{2}
..t_{m-4}-1]\oplus(K_m)_p.
\end{split}
\end{equation}
Moreover $\#K_{m,p}=\frac{t_{m-6}+t_{m-8}-1}{2}$.

\subsection{Proofs of Proposition c. and Theorem C}

Take $p=1$ in Expression~(\ref{E9}), $K_{m,1}=[t_{m-1}+2t_{m-4}
..\tfrac{3t_{m-1}-t_{m-3}-3}{2}]$.
It is easy to check that $K_{m,1}$ are pairwise disjoint for different $m$. Thus we get a chain
$K_{7,1},K_{8,1}..
K_{m,1},\ldots$.
By the definition of $C(n)$ (the number of \emph{distinct cubes} in $\T[1,n]$)
and $c(n)=C(n)-C(n-1)$, we have
$$c(n)=\#\{\w\mid \w\w\w\triangleright\T[1,n],\
\w\w\w\not\!\prec\T[1,n-1]\}.$$
So $c(n)=1$ if $n\in\cup_{m\geq7} K_{m,1}$,
otherwise, $c(n)=0$.
This achieves the proof of Proposition c.
By $C(n)=\sum_{i=1}^n c(i)$, Theorem C is an immediate consequence of Proposition c.

\subsection{Proofs of Proposition d. and Algorithm D}

Now we turn to give the number of repeated cubes in $\T[1,n]$.
For $m\geq7$ and $p\geq1$, we consider the sets
\begin{equation}\label{E11}
\Gamma_{m,p}=[\Gamma^3_{m,p},\Gamma^2_{m,p},\Gamma^1_{m,p}]=[0..t_{m-1}-1]\oplus (K_m)_p.
\end{equation}
By Expressions (\ref{E9}) and (\ref{E11}),
we get a one-to-one correspondence between $\Gamma_{m,p}$ and $K_{m,p}$.
Using Lemma \ref{Lt},
comparing minimal and maximal elements in these sets, we have
$$\Gamma_{m,p}=[\Gamma_{m-3,c_p+1},\Gamma_{m-2,b_p+1},\Gamma_{m-1,a_p+1}]\text{ for }m\geq10.$$
So we can define a recursive structure over $\{K_{m,p}\mid m\geq7,p\geq1\}$.
The recursive structure is a directed graph $\mathcal{G}'=(V',E')$ where
\begin{equation*}
\begin{split}
V'&=\{nodes\}=\{K_{m,p}\mid m\geq7,p\geq1\};\\
E'&=\{edges\}=
\begin{cases}
K_{m+1,p} \rightarrow K_{m,a_p+1};\\
K_{m+2,p} \rightarrow K_{m,b_p+1};\\
K_{m+3,p} \rightarrow K_{m,c_p+1}.
\end{cases}
\end{split}
\end{equation*}
Here the notation ``$x\rightarrow y$'' means
a directed edge from the node $x$ to~$y$.

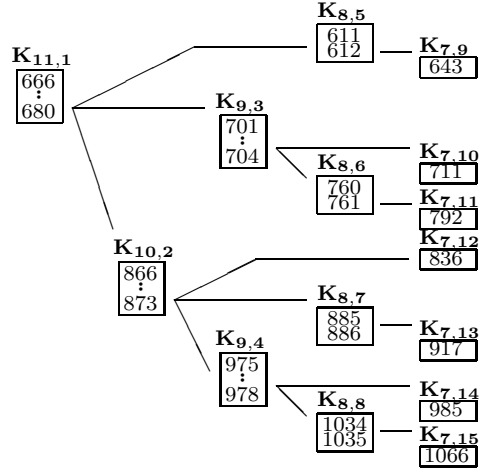
\begin{figure}[!ht]
\setlength{\unitlength}{0.67mm}
\newsavebox{\Q}
\savebox{\Q}(10.5,4){
\begin{picture}(11,4)
\put(-0.5,-1){\line(1,0){11}}
\put(-0.5,3){\line(1,0){11}}
\put(-0.5,-1){\line(0,1){4}}
\put(10.5,-1){\line(0,1){4}}
\end{picture}}
\newsavebox{\QQ}
\savebox{\QQ}(11,8){
\begin{picture}(11,8)
\put(-1,-1){\line(1,0){11}}
\put(-1,6.5){\line(1,0){11}}
\put(-1,-1){\line(0,1){7.5}}
\put(10,-1){\line(0,1){7.5}}
\end{picture}}
\newsavebox{\QQQ}
\savebox{\QQQ}(9,14){
\begin{picture}(9,14)
\put(-1,-1){\line(1,0){9}}
\put(-1,9.5){\line(1,0){9}}
\put(-1,-1){\line(0,1){10.5}}
\put(8,-1){\line(0,1){10.5}}
\put(2.5,3.5){$\cdot$}
\put(2.5,2.5){$\cdot$}
\end{picture}}

\centering
\tiny
\begin{picture}(95,92)
\put(1,68){\usebox{\QQQ}}
\put(2,68){680}
\put(2,74){666}
\put(0,79){$\mathbf{K_{11,1}}$}
\put(12,70){\line(1,0){26}}
\put(12,70){\line(1,-3){8}}
\put(12,70){\line(2,1){24}}\put(36,82){\line(1,0){22}}
\put(21,30){\usebox{\QQQ}}
\put(22,30){873}
\put(22,36){866}
\put(20,41){$\mathbf{K_{10,2}}$}
\put(32,32){\line(1,0){26}}
\put(32,32){\line(1,-2){7}}
\put(32,32){\line(2,1){16}}\put(48,40){\line(1,0){30}}
\put(41,12){\usebox{\QQQ}}
\put(42,12){978}
\put(42,18){975}
\put(40,23){$\mathbf{K_{9,4}}$}
\put(52,15){\line(1,0){26}}
\put(52,15){\line(1,-1){6}}
\put(41,59){\usebox{\QQQ}}
\put(42,59){704}
\put(42,65){701}
\put(40,70){$\mathbf{K_{9,3}}$}
\put(52,62){\line(1,0){26}}
\put(52,62){\line(1,-1){6}}
\put(60,3){\usebox{\QQ}}
\put(61,3){1035}
\put(61,6){1034}
\put(60,11){$\mathbf{K_{8,8}}$}
\put(60,24){\usebox{\QQ}}
\put(62,24){886}
\put(62,27){885}
\put(60,32){$\mathbf{K_{8,7}}$}
\put(60,50){\usebox{\QQ}}
\put(62,50){761}
\put(62,53){760}
\put(60,58){$\mathbf{K_{8,6}}$}
\put(60,80){\usebox{\QQ}}
\put(62,80){612}
\put(62,83){611}
\put(60,88){$\mathbf{K_{8,5}}$}
\put(80,0){\usebox{\Q}}
\put(81,0){1066}
\put(80,4){$\mathbf{K_{7,15}}$}
\put(80,9){\usebox{\Q}}
\put(82,9){985}
\put(80,13){$\mathbf{K_{7,14}}$}
\put(80,21){\usebox{\Q}}
\put(82,21){917}
\put(80,25){$\mathbf{K_{7,13}}$}
\put(80,39){\usebox{\Q}}
\put(82,39){836}
\put(80,43){$\mathbf{K_{7,12}}$}
\put(80,47){\usebox{\Q}}
\put(82,47){792}
\put(80,51){$\mathbf{K_{7,11}}$}
\put(80,56){\usebox{\Q}}
\put(82,56){711}
\put(80,60){$\mathbf{K_{7,10}}$}
\put(80,77){\usebox{\Q}}
\put(82,77){643}
\put(80,81){$\mathbf{K_{7,9}}$}
\put(72.5,81){\line(1,0){6}}
\put(72.5,51){\line(1,0){6}}
\put(72.5,27){\line(1,0){6}}
\put(72.5,6){\line(1,0){6}}
\end{picture}
\normalsize
\caption{Cube tree with root $K_{11,1}$.
The directed edges are always from left to right.
The number of cubes ending at position $n$ is equal to the number of integer $n$ occurs in the cube trees.
\label{Fig:4}}
\end{figure}

\begin{property}[]
The recursive structure $\mathcal{G}'$ is a family of finite rooted trees with
nodes $\{K_{m,p}\mid m\geq7,p\geq1\}$ satisfying the conditions:

$1.$ The roots are $\{K_{m,1}\mid m\geq7\}$;

$2.$ The leaves are $\{K_{7,p}\mid p\geq 1\}$.
\end{property}

By an analogous argument as Section 4, we call the recursive structure $\mathcal{G}'$ ``\emph{cube trees}". It has a similar graph embedding property as square trees.
This property helps us establish a recursive relation for counting the number of
repeated cubes in $\T[1,n]$, i.e., Proposition d. and Algorithm D.

\section{The number of $\A$-powers}

There are many other topics dealing with repetitions in words, such as \emph{fractional power}.
For instance, $\T[12,16]=ababa$ is a $\frac{5}{2}$-power. Recall the definition of $\A$-power in Section 1. The notion $\A$-power is a generalization of square (2-power) and cube (3-power).

For $\A\geq2$, $P\in\mathbb{Z}^{+}$ and $\w\prec\T$,
if $\T[P-\A|\w|+1,P]$ is an $\A$-power, $\T[P-2|\w|+1,P]$ is a square.
Thus we can determine all ending positions of $\A$-powers ($\A\geq2$) from the ending positions of squares.
Furthermore, if both $\T[P-2n..P-1]$ and $\T[P-2n+1..P]$ are squares,
$\T[P-2n,P]$ is obviously a $\left(2+\frac{1}{n}\right)$-power in $\T$.
If $[P-h..P]$ contains serial ending positions of squares of length $2n$,
set $[P-h+i..P]$ contains serial ending positions of $\left(2+\frac{i}{n}\right)$-powers of length $2n+i$, where $0\leq i\leq h$.

\medskip

\textbf{Case 1.} By Definition (\ref{N1}), the squares with ending positions in $K^1_{m,p}$ and $K^3_{m+3,q}$ have length $2t_{m-1}$.
By Expression (\ref{E5}) and Lemma~\ref{Lt},
set $V=[K^1_{m,c_p},K^3_{m+3,p},K^1_{m,c_p+1}]$
contains $\frac{t_{m+1}-t_{m-1}-1}{2}$'s ending positions of squares of length $2t_{m-1}$.
By the analysis in the preceding paragraph,
$$V[i,\tfrac{t_{m+1}-t_{m-1}-1}{2}]=[i..\tfrac{t_{m+1}-t_{m-1}-1}{2}]\oplus\big((K_{m+3})_p-\tfrac{t_{m+1}-t_{m-1}+1}{2}\big)$$
contains serial ending positions of $\left(2+\frac{i-1}{t_{m-1}}\right)$-powers of length $2t_{m-1}+i-1$, where $1\leq i\leq \frac{t_{m+1}-t_{m-1}-1}{2}$.

\smallskip

\noindent\emph{Example.} Taking $m=5$, $[K^1_{5,4},K^3_{8,1},K^1_{5,5}]=[94..108]$
contains 15's ending positions of $2$-powers (squares) of length $26$.
Thus $[95..108]$ contains 14's ending positions of $\left(2+\frac{1}{13}\right)$-powers of length $27$; $[107,108]$ contains two ending positions of $3$-powers (cubes) of length $39$;
And $[108]$ contains one ending position of $\left(3+\frac{1}{13}\right)$-power of length $40$.

\smallskip

\textbf{Case 2.} Consider other $K^j_{m,p}$'s for $j=1,2,3$, $m\geq4$ and $p\geq1$.

1. For $1\leq i\leq \frac{t_{m+1}-3t_{m-1}-1}{2}$ and $p\in \mathbb{Z}^{+}-\{c_q,c_q+1\mid q\geq1\}$,
\begin{equation*}
\begin{split}
&K^1_{m,p}[i,\tfrac{t_{m+1}-3t_{m-1}-1}{2}]\\
=~&[\tfrac{t_{m-1}-t_{m-3}+2i-1}{2}..t_{m-2}-1]\oplus\big((K_m)_p+t_{m-3}+t_{m-4}\big)
\end{split}
\end{equation*}
contains serial ending positions of $\left(2+\frac{i-1}{t_{m-1}}\right)$-powers.

2. For $1\leq i\leq \frac{t_{m+1}-3t_{m-1}-1}{2}$,
\begin{equation*}
\begin{split}
&K^2_{m,p}[i,\tfrac{t_{m+1}-3t_{m-1}-1}{2}]\\
=~&[\tfrac{t_{m-1}-2t_{m-2}+t_{m-3}+2i-1}{2}..t_{m-3}-1]\oplus\big((K_m)_p+t_{m-4}\big)
\end{split}
\end{equation*}
contains serial ending positions of $\left(2+\frac{i-1}{t_{m-3}+t_{m-4}}\right)$-powers.

3. For $1\leq i\leq \frac{-t_{m-2}+5t_{m-4}+1}{2}$ and $m\in\{4,5,6\}$,
\begin{equation*}
\begin{split}
K^3_{m,p}[i,\tfrac{-t_{m-2}+5t_{m-4}+1}{2}]
=[i-1..\tfrac{-t_{m-2}+5t_{m-4}-1}{2}]\oplus(K_m)_p
\end{split}
\end{equation*}
contains serial ending positions of $\left(2+\frac{i-1}{t_{m-4}}\right)$-powers.

\medskip

\noindent\emph{Remark.}
For a given sequence, it is an interesting and challenging task to determine its \emph{critical exponent} $e$, such that the sequence contains $\A$-powers for all $\A<e$, but has no $\A$-powers for
$\A>e$ (It may or may not have $e$-powers).
Let $\lambda$ be the only real zero of the polynomial $x^3-x^2-x-1=0$,
then $\lim_{m\rightarrow\infty}\tfrac{t_{m+1}}{t_{m}}=\lambda\approx1.84$,
see \cite{MS2014} for instance.

In Case 1, the maximal $\A$ is $\frac{3}{2}+\frac{t_{m+1}-3}{2t_{m-1}}$. So
$\lim_{m\rightarrow\infty}\max\A=\tfrac{3+\lambda^2}{2}$.

In Case 2, the maximal $\A$ are
$\tfrac{1}{2}+\tfrac{t_{m+1}-3}{2t_{m-1}},~
2+\tfrac{t_{m+1}-3t_{m-1}-3}{2t_{m-3}+2t_{m-4}},~\tfrac{9}{2}-\tfrac{t_{m-2}+1}{2t_{m-4}}$
for $j=1,2,3$ respectively. All of them are less than 3.
Thus there is no cube ending at these positions.

By the discussion above,
the critical exponent of $\T$ is $\tfrac{3+\lambda^2}{2}$.
This is different but equivalent to Theorem 4.5 in Tan-Wen \cite{TW2007}.

\medskip

Figure \ref{Fig:3} shows the evolving process of $\A$-power trees from square trees to cube trees.
In order to simplify the figure, we plot the ending positions in $\{K^j_{m,p}\mid j=1,2,3,\ p\geq1\}$ in one column.

Using the $\A$-power trees, we can count the numbers of
distinct and repeated $\A$-powers in $\T[1,n]$ for all $\A\geq2$ and $n\geq1$. But the expression is complicated. We prefer to omit it.

\begin{figure}[!ht]
\tiny
\setlength{\unitlength}{1mm}
\begin{center}
\begin{picture}(22,172)
\multiput(1.4,0)(0,2){77}{1}
\multiput(2.7,0)(0,2){7}{7}
\multiput(2.7,14)(0,2){10}{6}
\multiput(2.7,34)(0,2){10}{5}
\multiput(2.7,54)(0,2){10}{4}
\multiput(2.7,74)(0,2){10}{3}
\multiput(2.7,94)(0,2){10}{2}
\multiput(2.7,114)(0,2){10}{1}
\multiput(2.7,134)(0,2){10}{0}
\multiput(2.7,154)(0,2){4}{9}
\multiput(4,0)(0,20){9}{6}
\multiput(4,2)(0,20){8}{5}
\multiput(4,4)(0,20){8}{4}
\multiput(4,6)(0,20){8}{3}
\multiput(4,8)(0,20){8}{2}
\multiput(4,10)(0,20){8}{1}
\multiput(4,12)(0,20){8}{0}
\multiput(4,14)(0,20){8}{9}
\multiput(4,16)(0,20){8}{8}
\multiput(4,18)(0,20){8}{7}
\multiput(6.5,0)(3,0){6}{\line(0,1){166}}
\put(0,0){\line(0,1){166}}
\put(0,166){\line(1,0){21.5}}
\put(0,162){\line(1,0){21.5}}
\put(0,0){\line(1,0){21.5}}
\put(7,168){\small{$\alpha=2$}}
\put(3.4,164){$m$}
\put(1,162.6){$n$}
\put(0,165.5){\line(2,-1){6}}
\put(7.5,163){8}
\put(10.5,163){7}
\put(13.5,163){6}
\put(16.5,163){5}
\put(19.5,163){4}
\multiput(8,1)(0,2){15}{\circle*{2}}
\multiput(8,89)(0,2){15}{\circle*{2}}
\multiput(7.1,140)(0,2){11}{\hfill\rule{1.8mm}{1.8mm}}
\multiput(11,1)(0,2){8}{\circle*{2}}
\multiput(11,49)(0,2){8}{\circle*{2}}
\multiput(10.1,76)(0,2){6}{\hfill\rule{1.8mm}{1.8mm}}
\multiput(14,1)(0,2){4}{\circle*{2}}
\multiput(14,27)(0,2){4}{\circle*{2}}
\multiput(14,41)(0,2){4}{\circle*{2}}
\multiput(14,89)(0,2){4}{\circle*{2}}
\multiput(14,115)(0,2){4}{\circle*{2}}
\multiput(14,129)(0,2){4}{\circle*{2}}
\multiput(17,1)(0,2){2}{\circle*{2}}
\multiput(17,15)(0,2){2}{\circle*{2}}
\multiput(17,23)(0,2){2}{\circle*{2}}
\multiput(17,49)(0,2){2}{\circle*{2}}
\multiput(17,63)(0,2){2}{\circle*{2}}
\multiput(17,71)(0,2){2}{\circle*{2}}
\multiput(17,89)(0,2){2}{\circle*{2}}
\multiput(17,103)(0,2){2}{\circle*{2}}
\multiput(17,111)(0,2){2}{\circle*{2}}
\multiput(16.1,136)(0,2){2}{\hfill\rule{1.8mm}{1.8mm}}
\multiput(17,151)(0,2){2}{\circle*{2}}
\multiput(17,159)(0,2){2}{\circle*{2}}
\put(20,1){\circle*{2}}
\put(20,9){\circle*{2}}
\put(20,13){\circle*{2}}
\put(20,27){\circle*{2}}
\put(20,35){\circle*{2}}
\put(20,39){\circle*{2}}
\put(20,49){\circle*{2}}
\put(20,57){\circle*{2}}
\put(20,61){\circle*{2}}
\put(19.1,74){\hfill\rule{1.8mm}{1.8mm}}
\put(20,83){\circle*{2}}
\put(20,87){\circle*{2}}
\put(19.1,88){\hfill\rule{1.8mm}{1.8mm}}
\put(20,97){\circle*{2}}
\put(20,101){\circle*{2}}
\put(20,115){\circle*{2}}
\put(20,123){\circle*{2}}
\put(20,127){\circle*{2}}
\put(20,137){\circle*{2}}
\put(20,145){\circle*{2}}
\put(20,149){\circle*{2}}
\end{picture}
\begin{picture}(22,172)
\multiput(1.4,0)(0,2){77}{1}
\multiput(2.7,0)(0,2){7}{7}
\multiput(2.7,14)(0,2){10}{6}
\multiput(2.7,34)(0,2){10}{5}
\multiput(2.7,54)(0,2){10}{4}
\multiput(2.7,74)(0,2){10}{3}
\multiput(2.7,94)(0,2){10}{2}
\multiput(2.7,114)(0,2){10}{1}
\multiput(2.7,134)(0,2){10}{0}
\multiput(2.7,154)(0,2){4}{9}
\multiput(4,0)(0,20){9}{6}
\multiput(4,2)(0,20){8}{5}
\multiput(4,4)(0,20){8}{4}
\multiput(4,6)(0,20){8}{3}
\multiput(4,8)(0,20){8}{2}
\multiput(4,10)(0,20){8}{1}
\multiput(4,12)(0,20){8}{0}
\multiput(4,14)(0,20){8}{9}
\multiput(4,16)(0,20){8}{8}
\multiput(4,18)(0,20){8}{7}
\multiput(6.5,0)(3,0){6}{\line(0,1){166}}
\put(0,0){\line(0,1){166}}
\put(0,166){\line(1,0){21.5}}
\put(0,162){\line(1,0){21.5}}
\put(0,0){\line(1,0){21.5}}
\put(2,168){\small{$\alpha=2+\frac{1}{81}$}}
\put(3.4,164){$m$}
\put(1,162.6){$n$}
\put(0,165.5){\line(2,-1){6}}
\put(7.5,163){8}
\put(10.5,163){7}
\put(13.5,163){6}
\put(16.5,163){5}
\put(19.5,163){4}
\multiput(8,1)(0,2){15}{\circle{2}}
\multiput(8,1)(0,2){14}{\circle*{2}}
\multiput(8,89)(0,2){15}{\circle{2}}
\multiput(8,89)(0,2){14}{\circle*{2}}
\multiput(7.1,140)(0,2){11}{\hfill\rule{1.8mm}{1.8mm}}
\multiput(11,1)(0,2){8}{\circle{2}}
\multiput(11,1)(0,2){7}{\circle*{2}}
\multiput(11,49)(0,2){8}{\circle{2}}
\multiput(11,49)(0,2){7}{\circle*{2}}
\multiput(10.1,76)(0,2){6}{\hfill\rule{1.8mm}{1.8mm}}
\multiput(14,1)(0,2){4}{\circle{2}}
\multiput(14,27)(0,2){4}{\circle{2}}
\multiput(14,41)(0,2){4}{\circle{2}}
\multiput(14,89)(0,2){4}{\circle{2}}
\multiput(14,115)(0,2){4}{\circle{2}}
\multiput(14,129)(0,2){4}{\circle{2}}
\multiput(14,1)(0,2){3}{\circle*{2}}
\multiput(14,27)(0,2){3}{\circle*{2}}
\multiput(14,41)(0,2){3}{\circle*{2}}
\multiput(14,89)(0,2){3}{\circle*{2}}
\multiput(14,115)(0,2){3}{\circle*{2}}
\multiput(14,129)(0,2){3}{\circle*{2}}
\multiput(17,1)(0,2){2}{\circle{2}}
\multiput(17,15)(0,2){2}{\circle{2}}
\multiput(17,23)(0,2){2}{\circle{2}}
\multiput(17,49)(0,2){2}{\circle{2}}
\multiput(17,63)(0,2){2}{\circle{2}}
\multiput(17,71)(0,2){2}{\circle{2}}
\multiput(17,89)(0,2){2}{\circle{2}}
\multiput(17,103)(0,2){2}{\circle{2}}
\multiput(17,111)(0,2){2}{\circle{2}}
\put(17,1){\circle*{2}}
\put(17,15){\circle*{2}}
\put(17,23){\circle*{2}}
\put(17,49){\circle*{2}}
\put(17,63){\circle*{2}}
\put(17,71){\circle*{2}}
\put(17,89){\circle*{2}}
\put(17,103){\circle*{2}}
\put(17,111){\circle*{2}}
\multiput(16.1,136)(0,2){2}{\hfill\rule{1.8mm}{1.8mm}}
\multiput(17,151)(0,2){2}{\circle{2}}
\multiput(17,159)(0,2){2}{\circle{2}}
\put(17,151){\circle*{2}}
\put(17,159){\circle*{2}}
\put(20,1){\circle{2}}
\put(20,9){\circle{2}}
\put(20,13){\circle{2}}
\put(20,27){\circle{2}}
\put(20,35){\circle{2}}
\put(20,39){\circle{2}}
\put(20,49){\circle{2}}
\put(20,57){\circle{2}}
\put(20,61){\circle{2}}
\put(19.1,74){\hfill\rule{1.8mm}{1.8mm}}
\put(20,83){\circle{2}}
\put(20,87){\circle{2}}
\put(18.9,88){$\Box$}
\put(20,97){\circle{2}}
\put(20,101){\circle{2}}
\put(20,115){\circle{2}}
\put(20,123){\circle{2}}
\put(20,127){\circle{2}}
\put(20,137){\circle{2}}
\put(20,145){\circle{2}}
\put(20,149){\circle{2}}
\end{picture}
\begin{picture}(22,172)
\multiput(1.4,0)(0,2){77}{1}
\multiput(2.7,0)(0,2){7}{7}
\multiput(2.7,14)(0,2){10}{6}
\multiput(2.7,34)(0,2){10}{5}
\multiput(2.7,54)(0,2){10}{4}
\multiput(2.7,74)(0,2){10}{3}
\multiput(2.7,94)(0,2){10}{2}
\multiput(2.7,114)(0,2){10}{1}
\multiput(2.7,134)(0,2){10}{0}
\multiput(2.7,154)(0,2){4}{9}
\multiput(4,0)(0,20){9}{6}
\multiput(4,2)(0,20){8}{5}
\multiput(4,4)(0,20){8}{4}
\multiput(4,6)(0,20){8}{3}
\multiput(4,8)(0,20){8}{2}
\multiput(4,10)(0,20){8}{1}
\multiput(4,12)(0,20){8}{0}
\multiput(4,14)(0,20){8}{9}
\multiput(4,16)(0,20){8}{8}
\multiput(4,18)(0,20){8}{7}
\multiput(6.5,0)(3,0){6}{\line(0,1){166}}
\put(0,0){\line(0,1){166}}
\put(0,166){\line(1,0){21.5}}
\put(0,162){\line(1,0){21.5}}
\put(0,0){\line(1,0){21.5}}
\put(3,168){\small{$\alpha=2+\frac{1}{6}$}}
\put(3.4,164){$m$}
\put(1,162.6){$n$}
\put(0,165.5){\line(2,-1){6}}
\put(7.5,163){8}
\put(10.5,163){7}
\put(13.5,163){6}
\put(16.5,163){5}
\put(19.5,163){4}
\multiput(8,1)(0,2){15}{\circle{2}}
\multiput(8,1)(0,2){1}{\circle*{2}}
\multiput(8,89)(0,2){15}{\circle{2}}
\multiput(8,89)(0,2){8}{\circle*{2}}
\multiput(7.2,140.1)(0,2){10}{\hfill\rule{1.8mm}{1.8mm}}
\multiput(7,140)(0,2){11}{$\Box$}
\multiput(11,1)(0,2){8}{\circle{2}}
\multiput(11,49)(0,2){8}{\circle{2}}
\multiput(11,49)(0,2){3}{\circle*{2}}
\multiput(10.2,76.1)(0,2){5}{\hfill\rule{1.8mm}{1.8mm}}
\multiput(10,76)(0,2){6}{$\Box$}
\multiput(14,1)(0,2){4}{\circle{2}}
\multiput(14,27)(0,2){4}{\circle{2}}
\multiput(14,41)(0,2){4}{\circle{2}}
\multiput(14,89)(0,2){4}{\circle{2}}
\multiput(14,115)(0,2){4}{\circle{2}}
\multiput(14,129)(0,2){4}{\circle{2}}
\multiput(14,27)(0,2){2}{\circle*{2}}
\multiput(14,41)(0,2){3}{\circle*{2}}
\multiput(14,115)(0,2){2}{\circle*{2}}
\multiput(14,129)(0,2){3}{\circle*{2}}
\multiput(17,1)(0,2){2}{\circle{2}}
\multiput(17,15)(0,2){2}{\circle{2}}
\multiput(17,23)(0,2){2}{\circle{2}}
\multiput(17,49)(0,2){2}{\circle{2}}
\multiput(17,63)(0,2){2}{\circle{2}}
\multiput(17,71)(0,2){2}{\circle{2}}
\multiput(17,89)(0,2){2}{\circle{2}}
\multiput(17,103)(0,2){2}{\circle{2}}
\multiput(17,111)(0,2){2}{\circle{2}}
\put(17,15){\circle*{2}}
\put(17,23){\circle*{2}}
\put(17,63){\circle*{2}}
\put(17,71){\circle*{2}}
\put(17,103){\circle*{2}}
\put(17,111){\circle*{2}}
\multiput(16.1,136)(0,2){2}{\hfill\rule{1.8mm}{1.8mm}}
\multiput(17,151)(0,2){2}{\circle{2}}
\multiput(17,159)(0,2){2}{\circle{2}}
\put(17,151){\circle*{2}}
\put(17,159){\circle*{2}}
\put(20,1){\circle{2}}
\put(20,9){\circle{2}}
\put(20,13){\circle{2}}
\put(20,27){\circle{2}}
\put(20,35){\circle{2}}
\put(20,39){\circle{2}}
\put(20,49){\circle{2}}
\put(20,57){\circle{2}}
\put(20,61){\circle{2}}
\put(19.1,74){\hfill\rule{1.8mm}{1.8mm}}
\put(20,83){\circle{2}}
\put(20,87){\circle{2}}
\put(18.9,88){$\Box$}
\put(20,97){\circle{2}}
\put(20,101){\circle{2}}
\put(20,115){\circle{2}}
\put(20,123){\circle{2}}
\put(20,127){\circle{2}}
\put(20,137){\circle{2}}
\put(20,145){\circle{2}}
\put(20,149){\circle{2}}
\end{picture}
\begin{picture}(22,172)
\multiput(1.4,0)(0,2){77}{1}
\multiput(2.7,0)(0,2){7}{7}
\multiput(2.7,14)(0,2){10}{6}
\multiput(2.7,34)(0,2){10}{5}
\multiput(2.7,54)(0,2){10}{4}
\multiput(2.7,74)(0,2){10}{3}
\multiput(2.7,94)(0,2){10}{2}
\multiput(2.7,114)(0,2){10}{1}
\multiput(2.7,134)(0,2){10}{0}
\multiput(2.7,154)(0,2){4}{9}
\multiput(4,0)(0,20){9}{6}
\multiput(4,2)(0,20){8}{5}
\multiput(4,4)(0,20){8}{4}
\multiput(4,6)(0,20){8}{3}
\multiput(4,8)(0,20){8}{2}
\multiput(4,10)(0,20){8}{1}
\multiput(4,12)(0,20){8}{0}
\multiput(4,14)(0,20){8}{9}
\multiput(4,16)(0,20){8}{8}
\multiput(4,18)(0,20){8}{7}
\multiput(6.5,0)(3,0){6}{\line(0,1){166}}
\put(0,0){\line(0,1){166}}
\put(0,166){\line(1,0){21.5}}
\put(0,162){\line(1,0){21.5}}
\put(0,0){\line(1,0){21.5}}
\put(3,168){\small{$\alpha=2+\frac{1}{3}$}}
\put(3.4,164){$m$}
\put(1,162.6){$n$}
\put(0,165.5){\line(2,-1){6}}
\put(7.5,163){8}
\put(10.5,163){7}
\put(13.5,163){6}
\put(16.5,163){5}
\put(19.5,163){4}
\multiput(8,1)(0,2){15}{\circle{2}}
\multiput(8,89)(0,2){15}{\circle{2}}
\multiput(8,89)(0,2){2}{\circle*{2}}
\multiput(7.2,140.1)(0,2){8}{\hfill\rule{1.8mm}{1.8mm}}
\multiput(7,140)(0,2){11}{$\Box$}
\multiput(11,1)(0,2){8}{\circle{2}}
\multiput(11,49)(0,2){8}{\circle{2}}
\multiput(11,49)(0,2){1}{\circle*{2}}
\multiput(10.2,76.1)(0,2){4}{\hfill\rule{1.8mm}{1.8mm}}
\multiput(10,76)(0,2){6}{$\Box$}
\multiput(14,1)(0,2){4}{\circle{2}}
\multiput(14,27)(0,2){4}{\circle{2}}
\multiput(14,41)(0,2){4}{\circle{2}}
\multiput(14,89)(0,2){4}{\circle{2}}
\multiput(14,115)(0,2){4}{\circle{2}}
\multiput(14,129)(0,2){4}{\circle{2}}
\multiput(14,41)(0,2){2}{\circle*{2}}
\multiput(14,129)(0,2){2}{\circle*{2}}
\multiput(17,1)(0,2){2}{\circle{2}}
\multiput(17,15)(0,2){2}{\circle{2}}
\multiput(17,23)(0,2){2}{\circle{2}}
\multiput(17,49)(0,2){2}{\circle{2}}
\multiput(17,63)(0,2){2}{\circle{2}}
\multiput(17,71)(0,2){2}{\circle{2}}
\multiput(17,89)(0,2){2}{\circle{2}}
\multiput(17,103)(0,2){2}{\circle{2}}
\multiput(17,111)(0,2){2}{\circle{2}}
\put(17,23){\circle*{2}}
\put(17,71){\circle*{2}}
\put(17,111){\circle*{2}}
\multiput(16.1,136)(0,2){2}{\hfill\rule{1.8mm}{1.8mm}}
\multiput(17,151)(0,2){2}{\circle{2}}
\multiput(17,159)(0,2){2}{\circle{2}}
\put(17,159){\circle*{2}}
\put(20,1){\circle{2}}
\put(20,9){\circle{2}}
\put(20,13){\circle{2}}
\put(20,27){\circle{2}}
\put(20,35){\circle{2}}
\put(20,39){\circle{2}}
\put(20,49){\circle{2}}
\put(20,57){\circle{2}}
\put(20,61){\circle{2}}
\put(19.1,74){\hfill\rule{1.8mm}{1.8mm}}
\put(20,83){\circle{2}}
\put(20,87){\circle{2}}
\put(18.9,88){$\Box$}
\put(20,97){\circle{2}}
\put(20,101){\circle{2}}
\put(20,115){\circle{2}}
\put(20,123){\circle{2}}
\put(20,127){\circle{2}}
\put(20,137){\circle{2}}
\put(20,145){\circle{2}}
\put(20,149){\circle{2}}
\end{picture}
\begin{picture}(22,172)
\multiput(1.4,0)(0,2){77}{1}
\multiput(2.7,0)(0,2){7}{7}
\multiput(2.7,14)(0,2){10}{6}
\multiput(2.7,34)(0,2){10}{5}
\multiput(2.7,54)(0,2){10}{4}
\multiput(2.7,74)(0,2){10}{3}
\multiput(2.7,94)(0,2){10}{2}
\multiput(2.7,114)(0,2){10}{1}
\multiput(2.7,134)(0,2){10}{0}
\multiput(2.7,154)(0,2){4}{9}
\multiput(4,0)(0,20){9}{6}
\multiput(4,2)(0,20){8}{5}
\multiput(4,4)(0,20){8}{4}
\multiput(4,6)(0,20){8}{3}
\multiput(4,8)(0,20){8}{2}
\multiput(4,10)(0,20){8}{1}
\multiput(4,12)(0,20){8}{0}
\multiput(4,14)(0,20){8}{9}
\multiput(4,16)(0,20){8}{8}
\multiput(4,18)(0,20){8}{7}
\multiput(6.5,0)(3,0){6}{\line(0,1){166}}
\put(0,0){\line(0,1){166}}
\put(0,166){\line(1,0){21.5}}
\put(0,162){\line(1,0){21.5}}
\put(0,0){\line(1,0){21.5}}
\put(7,168){\small{$\alpha=3$}}
\put(3.4,164){$m$}
\put(1,162.6){$n$}
\put(0,165.5){\line(2,-1){6}}
\put(7.5,163){8}
\put(10.5,163){7}
\put(13.5,163){6}
\put(16.5,163){5}
\put(19.5,163){4}
\multiput(8,1)(0,2){15}{\circle{2}}
\multiput(8,89)(0,2){15}{\circle{2}}
\multiput(7,140)(0,2){11}{$\Box$}
\multiput(11,1)(0,2){8}{\circle{2}}
\multiput(11,49)(0,2){8}{\circle{2}}
\multiput(10.1,76)(0,2){6}{$\Box$}
\multiput(14,1)(0,2){4}{\circle{2}}
\multiput(14,27)(0,2){4}{\circle{2}}
\multiput(14,41)(0,2){4}{\circle{2}}
\multiput(14,89)(0,2){4}{\circle{2}}
\multiput(14,115)(0,2){4}{\circle{2}}
\multiput(14,129)(0,2){4}{\circle{2}}
\multiput(17,1)(0,2){2}{\circle{2}}
\multiput(17,15)(0,2){2}{\circle{2}}
\multiput(17,23)(0,2){2}{\circle{2}}
\multiput(17,49)(0,2){2}{\circle{2}}
\multiput(17,63)(0,2){2}{\circle{2}}
\multiput(17,71)(0,2){2}{\circle{2}}
\multiput(17,89)(0,2){2}{\circle{2}}
\multiput(17,103)(0,2){2}{\circle{2}}
\multiput(17,111)(0,2){2}{\circle{2}}
\multiput(16.1,136)(0,2){2}{\hfill\rule{1.8mm}{1.8mm}}
\multiput(17,151)(0,2){2}{\circle{2}}
\multiput(17,159)(0,2){2}{\circle{2}}
\put(20,1){\circle{2}}
\put(20,9){\circle{2}}
\put(20,13){\circle{2}}
\put(20,27){\circle{2}}
\put(20,35){\circle{2}}
\put(20,39){\circle{2}}
\put(20,49){\circle{2}}
\put(20,57){\circle{2}}
\put(20,61){\circle{2}}
\put(19.1,74){\hfill\rule{1.8mm}{1.8mm}}
\put(20,83){\circle{2}}
\put(20,87){\circle{2}}
\put(18.9,88){$\Box$}
\put(20,97){\circle{2}}
\put(20,101){\circle{2}}
\put(20,115){\circle{2}}
\put(20,123){\circle{2}}
\put(20,127){\circle{2}}
\put(20,137){\circle{2}}
\put(20,145){\circle{2}}
\put(20,149){\circle{2}}
\end{picture}
\end{center}
\normalsize
\caption{The evolving process of $\alpha$-power trees from square trees to cube trees.
Here we plot $K^j_{m,p}$ for different $j$ in one column. We use boxes (positions in Case 1) and circles (positions in Case 2)  to show the ending positions of all $\alpha$-powers. These boxes and circles appear filled (a $\alpha$-power ending at this position) or empty (a square ending at this position, but no $\alpha$-power ending here).}
\label{Fig:3}
\end{figure}
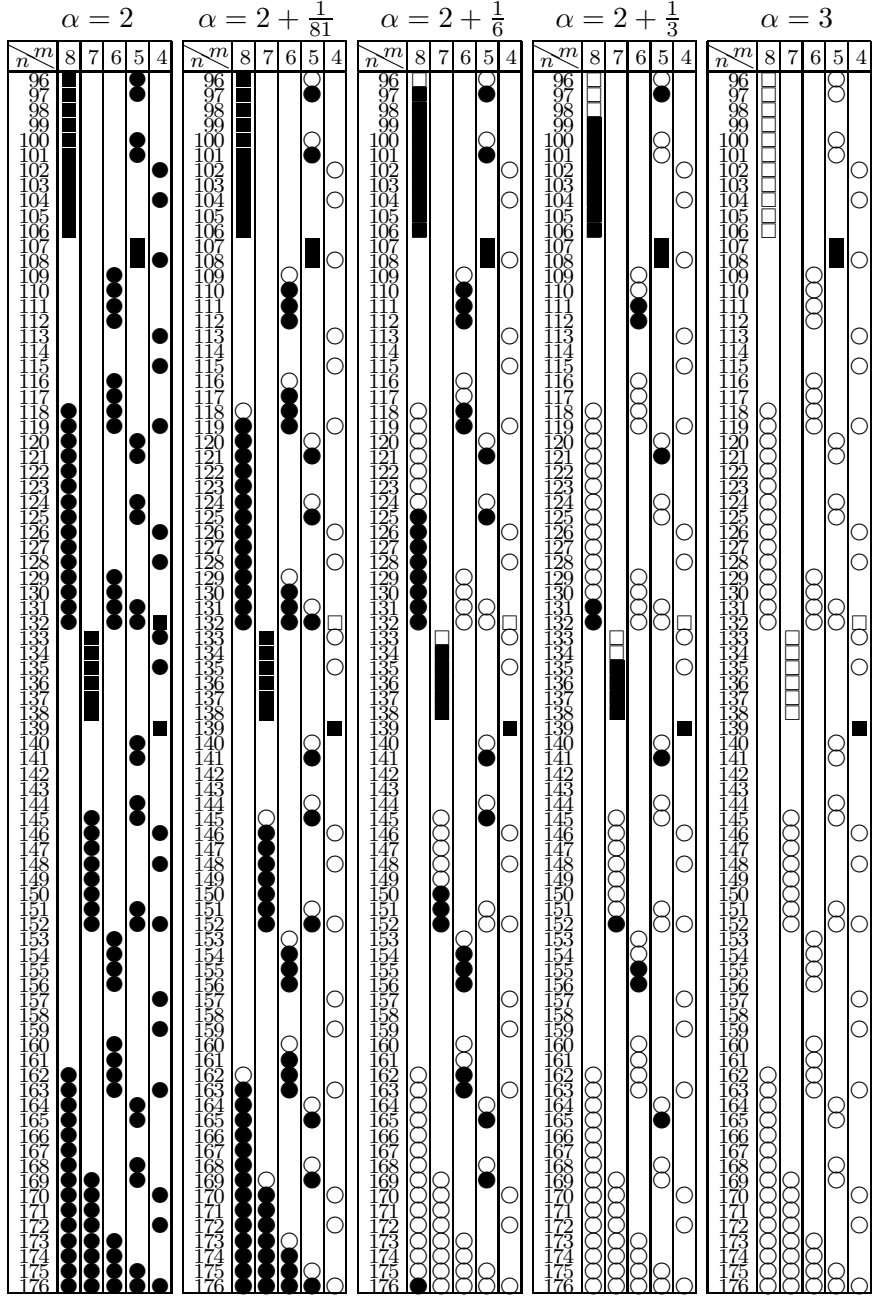

\end{document}